\newcommand{\bin}[2]{\binom{#1}{#2}}
\newcommand{\nat}{\mathbb{N}}
\newtheorem{theorem}{Theorem}[section]
\newtheorem{lemma}[theorem]{Lemma}
\newtheorem{corollary}[theorem]{Corollary}
\newtheorem{claim}[theorem]{Claim}
\newtheorem{conjecture}[theorem]{Conjecture}
\newtheorem{fact}[theorem]{Fact}
\newcommand{\lp}{\left(}
\newcommand{\rp}{\right)}
\newcommand{\ca}[1]{{\cal #1}}
\newcommand{\K}{{\cal K}}
\newcommand{\N}[1]{\ca{ND}(#1)}
\newcommand{\D}{\cal D}
\title{Complete $r$-partite graphs determined by their domination polynomial}
\author{Barbara M. Anthony\thanks{Department of Mathematics and Computer Science, Southwestern University, Georgetown, TX, USA. E-mail: {\tt anthonyb@southwestern.edu}} \and Michael E. Picollelli\thanks{Department of Mathematical Sciences, Carnegie Mellon University, Pittsburgh, PA, USA. E-mail: {\tt mpicolle@andrew.cmu.edu}}}
\date{}
\begin{document}

\maketitle

\begin{abstract}
The domination polynomial of a graph is the polynomial whose coefficients count the number of dominating sets of each cardinality.  A recent question asks which graphs are uniquely determined (up to isomorphism) by their domination polynomial.  In this paper, we completely describe the complete $r$-partite graphs which are; in the bipartite case, this settles in the affirmative a conjecture of Aalipour, Akbari and Ebrahimi~\cite{AAE}.
\end{abstract}

\section{Introduction}
\label{sec:intro}
While some graph-invariant polynomials are well-studied (including the chromatic polynomial for a century~\cite{Birkhoff}, the independence number polynomial for decades~\cite{GH}), other graph-invariant problems have not benefited from years of exploration. Recently there has been growing interest in the domination polynomial of a graph (see, for instance \cite{AAE, AAP, AP, AL}). We consider in particular a research problem posed at the 22nd British Combinatorial Conference (problem 519 in \cite{BCC}) in July 2009. We solve that problem, proving the conjecture of Aalipour, Akbari and Ebrahimi~\cite{AAE} in the affirmative, and provide some additional results about domination polynomials.

We begin with some notation. For a simple graph $G=(V,E)$, we let $v(G)=|V|$ and $e(G)=|E|$ denote the order and size, respectively, of $G$. For each vertex $v \in V$, we let $N_G(v)$ and $d_G(v)=|N_G(v)|$ denote the neighborhood and degree, respectively, of $v$ in $G$, and $N_G[v]=N_G(v)\cup \{v\}$ denote the closed neighborhood of $v$ in $G$.  We drop the subscript when the graph $G$ is clear from the context.

A set of vertices $S \subseteq V$ is said to be a {\em dominating set}, or that $S$ {\em dominates} $G$, if $\bigcup_{x \in S} N[x]=V$, i.e., every vertex in $V \setminus S$ has a neighbor in $S$.  Similarly, we call a set of vertices $T \subseteq V$ which does not dominate a {\em non-dominating set}, or say that $T$ does not dominate $G$. It is immediate that a subset of a non-dominating set is also a non-dominating set.

Let $d(G,i)$ denote the number of dominating sets in $G$ of cardinality $i$. Note that for non-empty $G$, $d(G,0)=0$.  The {\em domination polynomial} of $G$, introduced in~\cite{AP}, is the polynomial \[D(G,x)=\sum_{i=1}^{v(G)}d(G,i)\cdot x^i.\]
This polynomial is easily seen to be a graph invariant and subsequently induces a natural equivalence relation on graphs.  Two graphs $G$ and $H$ are deemed to be {\em $\D$-equivalent} if $D(G,x)=D(H,x)$.  Letting $[G]$ denote the equivalence class of $G$ up to isomorphism under this relation, we say $G$ is {\em $\D$-unique} if $[G]=\{G\}$, i.e., if $D(H,x)=D(G,x)$ implies $H \cong G$.

A question of recent interest concerning this equivalence relation $[\cdot]$ asks which graphs are determined by their domination polynomial.  It is known that cycles~\cite{AAP},\cite{APCycles} and cubic graphs of order 10~\cite{APOrder10} (particularly, the Petersen graph) are, while if $n \equiv 0 (\mod 3)$, the paths of order $n$ are not~\cite{AAP}.
However, the question remains open for most graphs. Aalipour-Hafshejani, Akbari and Ebrahimi~\cite{AAE} considered this question for the complete bipartite graph, which we will denote by $\K(a,b)$ to allow for extension to the multipartite case.

To state their results, we introduce some further notation. We let $\nat$ denote the positive integers.  Let $\chi(G)$ be the chromatic number of a graph $G$. For $t \in \{0,1\}$ and $a \in \nat$, let $H_t(a)$ denote the graph on $2a+t$ vertices formed by a copy of $K_a$ and a copy of $K_{a+t}$ connected by a matching from $K_a$ into $K_{a+t}$. An example is given in Figure \ref{fig:hta}. Equivalently, $H_0(a)$ is the Cartesian graph product $K_a \times K_2$, while $H_1(a)$ is the graph formed by deleting a vertex from $H_0(a+1)$.  We point out that $H_0(1)\cong \K(1,1)$, $H_0(2)=\K(2,2)$, and $H_1(1)\cong \K(1,2)$, but, in general, $\chi(H_t(a))=a+t$ and hence $H_t(a)\not\cong\K(a,a+t)$ for all other $a,t$.

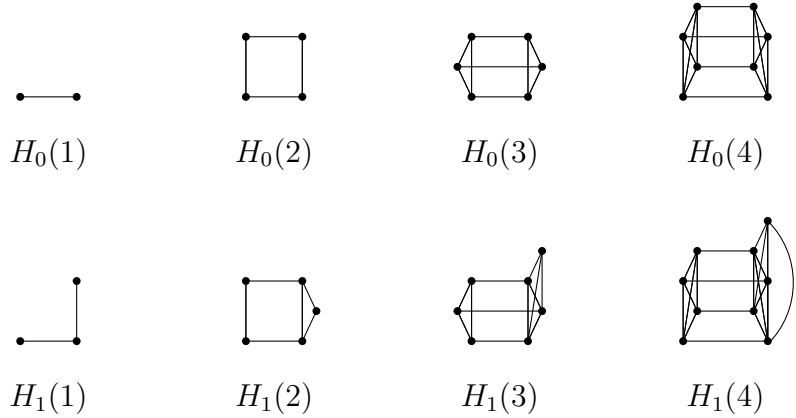
\begin{figure}
\label{fig:hta}

\begin{center}
\begin{tikzpicture}
\foreach \wid/\hgt in {.75/.4}{


\foreach \x/\y in {0/1.25}{

    \foreach \a/\er in {0/0}{
        \fill (\x - \er*\wid ,\y + \hgt*\a) circle (1.5pt);
        \fill (\x + \wid + \er*\wid, \y + \hgt*\a) circle (1.5pt);

        \draw (\x - \er*\wid ,\y + \hgt*\a) -- (\x + \wid + \er*\wid, \y + \hgt*\a);
    }

    \node [below] at (\x+.5*\wid,\y - \hgt) {$H_0(1)$};
}


\foreach \x/\y in {3/1.25}{

    \foreach \a/\er in {0/0,2/0}{


        \fill (\x - \er*\wid ,\y + \hgt*\a) circle (1.5pt);
        \fill (\x + \wid + \er*\wid, \y + \hgt*\a) circle (1.5pt);

        \draw (\x - \er*\wid ,\y + \hgt*\a) -- (\x + \wid + \er*\wid, \y + \hgt*\a);

        \foreach \b/\ers in {0/0,2/0}{
            \draw (\x - \er*\wid ,\y + \hgt*\a) -- (\x - \ers*\wid ,\y + \hgt*\b);
            \draw (\x + \wid + \er*\wid ,\y + \hgt*\a) -- (\x +\wid + \ers*\wid ,\y + \hgt*\b);
        }

    }

    \node [below] at (\x+.5*\wid,\y - \hgt) {$H_0(2)$};

}


\foreach \x/\y in {6/1.25}{
    \foreach \a/\er in {0/0,1/.25,2/0}{

        \fill (\x - \er*\wid ,\y + \hgt*\a) circle (1.5pt);
        \fill (\x + \wid + \er*\wid, \y + \hgt*\a) circle (1.5pt);

        \draw (\x - \er*\wid ,\y + \hgt*\a) -- (\x + \wid + \er*\wid, \y + \hgt*\a);


        \foreach \b/\ers in {0/0,1/.25,2/0}{
            \draw (\x - \er*\wid ,\y + \hgt*\a) -- (\x - \ers*\wid ,\y + \hgt*\b);
            \draw (\x + \wid + \er*\wid ,\y + \hgt*\a) -- (\x +\wid + \ers*\wid ,\y + \hgt*\b);
        }
    }

    \node [below] at (\x+.5*\wid,\y - \hgt) {$H_0(3)$};
}


\foreach \x/\y in {9/1.25}{

    \foreach \a/\er in {0/.25,1/0,2/.25,3/0}{


        \fill (\x - \er*\wid ,\y + \hgt*\a) circle (1.5pt);
        \fill (\x + \wid + \er*\wid, \y + \hgt*\a) circle (1.5pt);

        \draw (\x - \er*\wid ,\y + \hgt*\a) -- (\x + \wid + \er*\wid, \y + \hgt*\a);

        \foreach \b/\ers in {0/.25,1/0,2/.25,3/0}{
            \draw (\x - \er*\wid ,\y + \hgt*\a) -- (\x - \ers*\wid ,\y + \hgt*\b);
            \draw (\x + \wid + \er*\wid ,\y + \hgt*\a) -- (\x +\wid + \ers*\wid ,\y + \hgt*\b);
        }
    }

    \node [below] at (\x+.5*\wid,\y - \hgt) {$H_0(4)$};
}


\foreach \x/\y in {0/-2.0}{
    \foreach \a/\er in {0/0}{
        \fill (\x - \er*\wid ,\y + \hgt*\a) circle (1.5pt);
        \fill (\x + \wid + \er*\wid, \y + \hgt*\a) circle (1.5pt);

        \draw (\x - \er*\wid ,\y + \hgt*\a) -- (\x + \wid + \er*\wid, \y + \hgt*\a);

    }

    \draw (\x+\wid,\y+2*\hgt) -- (\x+\wid,\y);
    \fill (\x+\wid,\y+2*\hgt) circle (1.5pt);

    \node [below] at (\x+.5*\wid,\y - \hgt) {$H_1(1)$};

}


\foreach \x/\y in {3/-2.0}{
    \foreach \a/\er in {0/0,2/0}{


        \fill (\x - \er*\wid ,\y + \hgt*\a) circle (1.5pt);
        \fill (\x + \wid + \er*\wid, \y + \hgt*\a) circle (1.5pt);

        \draw (\x - \er*\wid ,\y + \hgt*\a) -- (\x + \wid + \er*\wid, \y + \hgt*\a);

        \foreach \b/\ers in {0/0,2/0}{
            \draw (\x - \er*\wid ,\y + \hgt*\a) -- (\x - \ers*\wid ,\y + \hgt*\b);
            \draw (\x + \wid + \er*\wid ,\y + \hgt*\a) -- (\x +\wid + \ers*\wid ,\y + \hgt*\b);
        }

    }


    \fill (\x + 1.25*\wid, \y + \hgt) circle (1.5pt);

    \foreach \a in {0,2}
        \draw (\x + 1.25*\wid, \y + \hgt) -- (\x + \wid,\y + \a*\hgt);

    \node [below] at (\x+.5*\wid,\y - \hgt) {$H_1(2)$};

}


\foreach \x/\y in {6/-2.0}{

    \foreach \a/\er in {0/0,1/.25,2/0}{


        \fill (\x - \er*\wid ,\y + \hgt*\a) circle (1.5pt);
        \fill (\x + \wid + \er*\wid, \y + \hgt*\a) circle (1.5pt);

        \draw (\x - \er*\wid ,\y + \hgt*\a) -- (\x + \wid + \er*\wid, \y + \hgt*\a);

        \foreach \b/\ers in {0/0,1/.25,2/0}{
            \draw (\x - \er*\wid ,\y + \hgt*\a) -- (\x - \ers*\wid ,\y + \hgt*\b);
            \draw (\x + \wid + \er*\wid ,\y + \hgt*\a) -- (\x +\wid + \ers*\wid ,\y + \hgt*\b);
        }
    }

    \fill (\x + 1.25*\wid, \y + 3*\hgt) circle (1.5pt);

    \foreach \a/\er in {0/0,1/.25,2/0}
    \draw (\x + 1.25*\wid, \y + 3*\hgt) -- (\x+ \wid + \er*\wid, \y + \a*\hgt);

    \node [below] at (\x+.5*\wid,\y - \hgt) {$H_1(3)$};

}


\foreach \x/\y in {9/-2.0}{
    \foreach \a/\er in {0/.25,1/0,2/.25,3/0}{

        \fill (\x - \er*\wid ,\y + \hgt*\a) circle (1.5pt);
        \fill (\x + \wid + \er*\wid, \y + \hgt*\a) circle (1.5pt);

        \draw (\x - \er*\wid ,\y + \hgt*\a) -- (\x + \wid + \er*\wid, \y + \hgt*\a);

        \foreach \b/\ers in {0/.25,1/0,2/.25,3/0}{
            \draw (\x - \er*\wid ,\y + \hgt*\a) -- (\x - \ers*\wid ,\y + \hgt*\b);
            \draw (\x + \wid + \er*\wid ,\y + \hgt*\a) -- (\x +\wid + \ers*\wid ,\y + \hgt*\b);
        }
    }

    \fill (\x + 1.25*\wid, \y + 4*\hgt) circle (1.5pt);
    \foreach \a/\er in  {0/.25,1/0,2/.25,3/0}
        \draw (\x + 1.25*\wid, \y + 4*\hgt) -- (\x+ \wid + \er*\wid, \y + \a*\hgt);

    \draw (\x + 1.25*\wid, \y + 4*\hgt) arc (45:-45:2.9*\hgt);

    \node [below] at (\x+.5*\wid,\y - \hgt) {$H_1(4)$};
}

}

\end{tikzpicture}
\end{center}

\caption{The graphs $H_0(a)$ and $H_1(a)$ for $1 \le a \le 4$. }
\end{figure}

Aalipour-Hafshejani {\em et al.} observed the following fact and showed the following result.

\begin{fact}[\cite{AAE}]\label{fact:AAE} For any $a \ge 1$ and $t \in \{0,1\}$, $D(\K(a,a+t),x)=D(H_t(a),x)$.
\end{fact}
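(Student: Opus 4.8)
The plan is to compute the coefficients $d(G,i)$ directly for $G=\K(a,a+t)$ and for $G=H_t(a)$, and to observe that they agree term by term. The engine is a simple reformulation: a set $T\subseteq V(G)$ fails to dominate $G$ if and only if some vertex $v\notin T$ has $N(v)\cap T=\emptyset$, i.e., if and only if $T\subseteq V(G)\setminus N[v]$ for some $v\in V(G)$. Hence the non-dominating sets of $G$ are exactly $\bigcup_{v\in V(G)}2^{V(G)\setminus N[v]}$ (where $2^{S}$ denotes the collection of all subsets of $S$), so $d(G,i)$ equals $\binom{v(G)}{i}$ minus the number of $i$-element sets in this union; the latter can be found by inclusion--exclusion once we have listed the few distinct sets $V(G)\setminus N[v]$ and understood how they sit inside one another.

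First I would run this for $\K(a,b)$ with parts $A$ (of size $a$) and $B$ (of size $b$). Here $V\setminus N[v]=A\setminus\{v\}$ when $v\in A$ and $V\setminus N[v]=B\setminus\{v\}$ when $v\in B$, so the non-dominating sets are exactly the proper subsets of $A$ together with the proper subsets of $B$; since $A\cap B=\emptyset$, these two families meet only in $\emptyset$, whence, for every $i\ge1$,
\[
d(\K(a,b),i)=\binom{a+b}{i}-\Bigl(\binom{a}{i}-[i=a]\Bigr)-\Bigl(\binom{b}{i}-[i=b]\Bigr),
\]
where $[P]$ is $1$ if $P$ holds and $0$ otherwise.

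Next I would run the same computation for $G=H_t(a)$, writing $U=\{u_1,\dots,u_a\}$ for the $K_a$, $W=\{w_1,\dots,w_{a+t}\}$ for the $K_{a+t}$, and $u_iw_i$ $(1\le i\le a)$ for the matching edges. Then $N[u_i]=U\cup\{w_i\}$ and $N[w_i]=W\cup\{u_i\}$ for $i\le a$, and, only when $t=1$, $N[w_{a+1}]=W$; so the sets $V\setminus N[v]$ are $W\setminus\{w_i\}$ and $U\setminus\{u_i\}$ for $1\le i\le a$, together with $U$ itself when $t=1$. A short check gives $\bigcup_{i\le a}2^{W\setminus\{w_i\}}=\{T\subseteq W:\{w_1,\dots,w_a\}\not\subseteq T\}$ and $\bigcup_{i\le a}2^{U\setminus\{u_i\}}=\{T:T\subsetneq U\}$. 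When $t=0$ the non-dominating sets are therefore the proper subsets of $W$ together with the proper subsets of $U$, meeting only in $\emptyset$; when $t=1$ the family of proper subsets of $U$ is absorbed into $2^{U}$, leaving $\{T\subseteq W:\{w_1,\dots,w_a\}\not\subseteq T\}\cup 2^{U}$, whose two parts again meet only in $\emptyset$. Counting $i$-element sets in each case and comparing with the displayed formula at $b=a+t$ shows that $D(\K(a,a+t),x)$ and $D(H_t(a),x)$ have equal $i$-th coefficient for all $i$ --- namely $\binom{2a}{i}-2\binom{a}{i}+2[i=a]$ when $t=0$ and $\binom{2a+1}{i}-\binom{a}{i}-\binom{a+1}{i}+[i=a]+[i=a+1]$ when $t=1$ --- which is the claimed identity.

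The computation is pure bookkeeping, so there is no deep obstacle; the one place that wants care is the case $t=1$ of $H_t(a)$, where the unmatched vertex $w_{a+1}$ contributes the extra set $U$, and one must check both that $U$ exactly absorbs the proper subsets of $U$ coming from the other vertices of $W$ and that $2^{U}$ still meets the $W$-side family only in $\emptyset$, so that no correction term is lost or double-counted. (For $t=0$ one can even skip the counting: identifying part $A$ of $\K(a,a)$ with $U$ and part $B$ with $W$ gives a size-preserving bijection of the two vertex sets under which non-dominating sets correspond to non-dominating sets, since in each graph these are precisely the proper subsets of one of the two sides.)
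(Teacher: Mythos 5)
Your proof is correct. The paper itself offers no proof of this statement --- it is recorded as a Fact imported from \cite{AAE} --- so there is no in-paper argument to compare against; your direct determination of the non-dominating sets of both graphs (proper subsets of the two sides for $\K(a,b)$, and the families $\{T\subseteq W:\{w_1,\dots,w_a\}\not\subseteq T\}$ and $2^U$ resp.\ the proper subsets of $U$ for $H_t(a)$) is accurate, including the delicate $t=1$ bookkeeping around the unmatched vertex, and it is fully consistent with the machinery the paper uses elsewhere (e.g.\ Corollary \ref{cor:ndsofcomplete}).
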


\begin{theorem}[\cite{AAE}]\label{thm:AAE}
For all $a \in \nat$, $[\K(a,a)] = \{\K(a,a), H_0(a)\}$.
\end{theorem}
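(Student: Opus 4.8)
The plan is to recover $H$ from just a few coefficients of $D(H,x)$, working in the complement $\overline{H}$, where every neighbourhood used below has the same size. One computes $D(\K(a,a),x)=\bigl((1+x)^a-1\bigr)^2+2x^a$, so the only data we use are $v(H)=2a$ together with $d(H,a)=\bin{2a}{a}$, $d(H,a-1)=\bin{2a}{a-1}-2a$ and $d(H,2)=\bin{2a}{2}-2\bin{a}{2}$. The cases $a\le 3$ are handled directly: for $a\in\{1,2\}$ the equivalence class is a single graph, and for $a=3$ the next step forces $H$ to be cubic on $6$ vertices, hence $\K(3,3)$ or the prism $H_0(3)$; so assume $a\ge 4$. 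For any graph $G$ on $n$ vertices, an $i$-set fails to dominate $G$ precisely when it is contained in some $N_{\overline{G}}(v)$, so $\bin{n}{i}-d(G,i)$ counts the $i$-sets lying inside a neighbourhood of $\overline{G}$. Comparing with $\K(a,a)$: $\bin{2a}{a}-d(H,a)=0$ forces $d_{\overline{H}}(v)\le a-1$ for every $v$ (equivalently $\delta(H)\ge a$), and $\bin{2a}{a-1}-d(H,a-1)=2a$ forces exactly $2a$ distinct $(a-1)$-sets to arise as neighbourhoods of $\overline{H}$; since there are only $2a$ vertices, $\overline{H}$ is $(a-1)$-regular with pairwise distinct open neighbourhoods -- equivalently, $H$ is $a$-regular with no two vertices sharing a closed neighbourhood.

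Next, count the paths of length two in $\overline{H}$ in two ways:
\[
  \sum_{z}\bin{d_{\overline{H}}(z)}{2}\;=\;\sum_{\{x,y\}}\bigl|N_{\overline{H}}(x)\cap N_{\overline{H}}(y)\bigr|\;=\;2a\bin{a-1}{2}\;=\;a(a-1)(a-2).
\]
No two neighbourhoods coincide, so each summand on the right is at most $a-2$; and $\bin{2a}{2}-d(H,2)=2\bin{a}{2}=a(a-1)$ is the number of pairs of vertices with a common neighbour. Thus $a(a-1)$ nonzero terms, each at most $a-2$, sum to $a(a-1)(a-2)$, so every pair of vertices of $\overline{H}$ has either $0$ or exactly $a-2$ common neighbours. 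Declare $u\approx v$ when they have $a-2$ common neighbours. If $u\approx v\approx w$ but $u\not\approx w$, then $N_{\overline{H}}(v)$ would contain the disjoint $(a-2)$-sets $N_{\overline{H}}(v)\cap N_{\overline{H}}(u)$ and $N_{\overline{H}}(v)\cap N_{\overline{H}}(w)$, impossible for $a\ge 4$; so $\approx$ is an equivalence relation. Running the same count from a fixed vertex $v$ gives $\sum_{x\ne v}|N_{\overline{H}}(v)\cap N_{\overline{H}}(x)|=(a-1)(a-2)$, hence $v$ has exactly $a-1$ many $\approx$-neighbours; so $\approx$ has exactly two classes $A$ and $B$, each of size $a$.

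Now the $a$ distinct $(a-1)$-sets $\{N_{\overline{H}}(w):w\in A\}$ meet pairwise in $a-2$ points. For a family of three or more $(a-1)$-sets with all pairwise intersections of size $a-2$, either they share a common $(a-2)$-element core, or they all lie in a common $a$-set. The first alternative would make every element of the core adjacent in $\overline{H}$ to all of $A$, so of degree at least $a$ -- impossible. Hence the $N_{\overline{H}}(w)$, $w\in A$, are precisely the $(a-1)$-subsets of the $a$-set $D_A:=\bigcup_{w\in A}N_{\overline{H}}(w)$, and likewise for $B$ with an $a$-set $D_B$. A vertex of $D_A\cap D_B$ would be adjacent to $a-1$ vertices of $A$ and to $a-1$ of $B$, hence of degree at least $2a-2$; so $D_A\sqcup D_B=V$ and $\{N_{\overline{H}}(v):v\in V\}$ is exactly the collection of all $(a-1)$-subsets of $D_A$ together with all $(a-1)$-subsets of $D_B$. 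Finally, using only that adjacency in $\overline{H}$ is symmetric and loopless, a short case analysis -- according to whether $\overline{H}$ has any edge inside $D_A$ -- shows that $\overline{H}$ is either $K_a\sqcup K_a$ (so $H\cong\K(a,a)$) or $\K(a,a)$ with a perfect matching removed, which is $\overline{H_0(a)}$ (so $H\cong H_0(a)$). With Fact~\ref{fact:AAE}, this gives $[\K(a,a)]=\{\K(a,a),H_0(a)\}$.

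The crux is the second step: with only three coefficients of $D(H,x)$ at hand, everything depends on the averaging identity above, which forces the clean dichotomy ``$0$ or $a-2$ common neighbours'' and hence the near-strongly-regular structure of $\overline{H}$. Once that is available the rest is a careful but essentially routine combinatorial classification -- the one delicate point being the elimination of the ``mixed'' configuration in the concluding symmetry argument.
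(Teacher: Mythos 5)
Your proof is correct, and it takes a genuinely different route from the paper's. (The paper does not reprove Theorem~\ref{thm:AAE} directly; it is quoted from \cite{AAE} and then recovered as the $r=2$ instance of Theorems~\ref{thm:complete} and~\ref{thm:balanced}.) The first step is common to both arguments: from $d(H,a)=\bin{2a}{a}$ and $d(H,a-1)=\bin{2a}{a-1}-2a$ one gets that $H$ is $a$-regular with the $2a$ sets $V\setminus N_H[v]$ pairwise distinct --- this is exactly what Lemma~\ref{lem:mindeg} and Corollary~\ref{cor:forcedregular} deliver. After that the methods diverge. The paper's proof of Theorem~\ref{thm:balanced} uses $d(G,2)$ to count the \emph{dominating} pairs, builds the auxiliary graph $F$ of dominating pairs, shows $F$ is $K_{r+1}$-free, and invokes Tur\'an's theorem to recover the partition; you instead use $d(H,2)$ to count \emph{non-dominating} pairs and combine it with a two-way count of paths of length two in $\overline{H}$, forcing the dichotomy that every pair of vertices has $0$ or exactly $a-2$ common $\overline{H}$-neighbours, after which a set-system argument on the family of neighbourhoods finishes the classification. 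Your averaging identity is an elegant substitute for Tur\'an (and for the Kruskal--Katona machinery the paper needs in the non-equipartite case), though it exploits the exact regularity available here and would not obviously extend to $\K(a,b)$ with $a\ne b$; the paper's Tur\'an-based route generalizes directly to all $r$ and, with Theorem~\ref{thm:lkkt}, to unequal part sizes. Two points in your write-up are compressed but sound: the concluding ``short case analysis'' does work --- since every vertex of $D_A$ has all $a-1$ of its $\overline{H}$-neighbours in $A$ and every vertex of $A$ has all of them in $D_A$, the graph $\overline{H}$ splits as an $(a-1)$-regular graph on $A\cap D_A$, one on $B\cap D_B$, and an $(a-1)$-regular bipartite graph on $(A\cap D_B)\cup(B\cap D_A)$, forcing $|A\cap D_A|\in\{0,a\}$ and hence the two stated outcomes --- and the small cases $a\le 3$ are correctly disposed of (there are only two cubic graphs on six vertices). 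It would be worth writing out that final splitting explicitly, but there is no gap.
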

They also conjectured that $[\K(a,a+1)] = \{\K(a,a+1), H_1(a)\}$, and established partial results supporting this conjecture.  However, the absence of a similar construction when the size of the partitions differs by more than one led them to the following conjecture.

\begin{conjecture}[\cite{AAE}, Conjecture 2]\label{conj:AAE}
For all $a,b \in \nat$, if $|a-b| \ge 2$ then $\K(a,b)$ is $\ca{D}$-unique.
\end{conjecture}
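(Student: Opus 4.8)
Throughout write $n=a+b$ and assume, without loss of generality, $a\le b$, so that $b\ge a+2$. The plan is to work with the \emph{non-dominating numbers} $\bar d(G,i)=\binom{v(G)}{i}-d(G,i)$, the number of $i$-element sets that fail to dominate $G$. If $D(H,x)=D(\K(a,b),x)$ then, by Fact~\ref{fact:AAE} and the resulting closed form for the polynomial, these are completely prescribed: $\sum_i\bar d(H,i)x^i=(1+x)^a+(1+x)^b-1-x^a-x^b$. The one observation I would lean on repeatedly is that a set $S$ fails to dominate $H$ precisely when $V(H)\setminus S$ contains some closed neighbourhood $N_H[z]$; hence $\bar d(H,n-k)$ counts exactly the $k$-element subsets of $V(H)$ that contain at least one closed neighbourhood.

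I would first dispose of the star case $a=1$. Here $d(\K(1,b),1)=1$, so $H$ has a vertex $v$ joined to all others, and splitting dominating sets according to whether they contain $v$ gives $D(H,x)=x(1+x)^b+D(H-v,x)$. Comparing with $D(\K(1,b),x)=x(1+x)^b+x^b$ forces $D(H-v,x)=x^b$, and a graph on $b$ vertices with this domination polynomial must be edgeless (an edge $uw$ would make $V(H-v)\setminus\{u\}$ a dominating set of size $b-1$); so $H\cong\K(1,b)$. The same cone-splitting argument has a general form I will reuse in the endgame: if $V(H)=P\sqcup Q$ with $Q$ independent and every $P$--$Q$ pair an edge, then
\[D(H,x)=\big((1+x)^{|P|}-1\big)\big((1+x)^{|Q|}-1\big)+x^{|Q|}+D(H[P],x).\]

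Now take $a\ge2$. From $\bar d(H,n-k)=0$ for $1\le k\le a$ and $\bar d(H,n-a-1)=b$ one reads off at once that $\delta(H)=a$ and that the degree-$a$ vertices of $H$ realise exactly $b$ distinct closed neighbourhoods $T_1,\dots,T_b$, each of size $a+1$. The structural heart is the next coefficient, $\bar d(H,n-a-2)=\binom b2$: I would compare this with an inclusion--exclusion count of the $(a+2)$-sets that contain some $T_i$, the upshot being that every two of the $T_i$ meet in exactly $a$ vertices and that no $(a+2)$-set contains three of them. The only families of $(a+1)$-sets with all pairwise intersections of size $a$ are the sunflowers about a common $a$-element core and the families of $(a+1)$-subsets of a single $(a+2)$-set; the latter would stuff three members into one $(a+2)$-set, so we conclude $T_i=P\cup\{q_i\}$ for a fixed $a$-set $P$ and distinct $q_i\notin P$. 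Since $|P|+b=n$ this partitions $V(H)=P\sqcup Q$ with $Q=\{q_1,\dots,q_b\}$. Continuing to match the coefficients $\bar d(H,n-a-k)$ for $k\ge3$ — now using them to constrain which vertices of degree exceeding $a$ can occur — I would upgrade this to: every $q_i$ has degree exactly $a$ with $N_H[q_i]=P\cup\{q_i\}$. Equivalently $Q$ is independent and every $P$--$Q$ pair is an edge, i.e.\ $H$ is $\K(P,Q)$ together with the edge set of $H[P]$.

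The endgame is then immediate from the displayed identity, which gives $D(H,x)=\big((1+x)^a-1\big)\big((1+x)^b-1\big)+x^b+D(H[P],x)$; applying the same identity to $\K(a,b)$, where $H[P]$ is edgeless and contributes $x^a$, reproduces the polynomial of Fact~\ref{fact:AAE}. Subtracting, $D(H[P],x)=x^a$, and exactly as in the case $a=1$ this forces $H[P]$ to be edgeless, so $H\cong\K(a,b)$. The hard part will be the penultimate step — propagating the sunflower structure through the coefficients $\bar d(H,n-a-k)$ to rule out high-degree vertices inside $Q$ (equivalently, to show that the maximal non-dominating sets of $H$ are precisely the $b$ sets $Q\setminus\{q_i\}$ and the $a$ sets $P\setminus\{p\}$, matching $\K(a,b)$); I expect essentially all the casework to live there, and the hypothesis $b\ge a+2$ to be exactly what makes the count close, paralleling the way it excludes the rival construction $H_t(a)$ that exists when $|a-b|\le1$.
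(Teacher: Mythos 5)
Your overall architecture is sound, and your route to the intermediate structure is a genuinely different (and correct) one: where the paper pins down the family of non-dominating $(b-1)$-sets as $\bin{A}{b-1}$ via the Lov\'asz form of Kruskal--Katona applied to the shadow, you reach the complementary statement --- that the $b$ closed neighbourhoods of minimum-degree vertices form a sunflower $T_i=P\cup\{q_i\}$ with core $P$, $|P|=a$ --- via a truncated inclusion--exclusion on the coefficient $\bar d(H,n-a-2)=\bin{b}{2}$ together with the standard dichotomy for $(a+1)$-sets with pairwise intersections of size $a$. That chain of inequalities does close (equality forces all pairwise intersections to have size $a$ and forbids any $(a+2)$-set from containing three of the $T_i$, which kills the non-sunflower branch), and your $a=1$ and endgame computations are correct.

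The genuine gap is the step you yourself flag: upgrading ``each $T_i=P\cup\{q_i\}$ is the closed neighbourhood of \emph{some} vertex'' to ``$N_H[q_i]=P\cup\{q_i\}$ for \emph{every} $i$'', i.e.\ ruling out that some $T_i$ is realised only by a vertex of $P$. You propose to do this by matching the coefficients $\bar d(H,n-a-k)$ for $k\ge 3$, but you give no argument, and this is precisely where the paper's proof does its one clever thing: it observes that the realising vertices $v_X$ each meet $A$ in exactly one vertex of their closed neighbourhood, so every $(b-1)$-subset of $Z=\{v_X\}$ is itself non-dominating, forcing $Z=A$ and hence $N(v)=V\setminus A$ for all $v\in A$. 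Without something playing that role your proof does not conclude. The good news is that higher coefficients are not the right tool and are not needed: the $b$ sets $T_i$ are realised by $b$ \emph{distinct} vertices $z_i\in T_i=P\cup\{q_i\}$, and since $|P|=a<b$ at least one $z_i$ lies in $Q$, hence equals $q_i$; but then every $p\in P$ is adjacent to that $q_i$, so no $p\in P$ can have $N_H[p]=P\cup\{q_j\}$ for $j\ne i$, and therefore every $T_j$ is realised by $q_j$ itself. Either this pigeonhole argument or the paper's $Z=A$ argument must be supplied; as written, your ``penultimate step'' is a plan, not a proof, and it is the load-bearing step.
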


Some partial progress on Conjecture \ref{conj:AAE} was made in \cite{AAE2}, where it was verified in the case $a \le 4$ or $b \ge \max\{\bin{a}{2},a+2\}$.  Our main result resolves Conjecture \ref{conj:AAE} in the affirmative, and, moreover, provides a complete classification of the $\D$-unique complete $r$-partite graphs. For a given $r \in \nat$ and $a_1,\ldots,a_r \in \nat$, we let $\K(a_1,\ldots,a_r)$ denote the complete $r$-partite graph on $n=a_1 + \cdots + a_r$ vertices with color classes of size $a_1,\ldots,a_r$.

\begin{theorem}\label{thm:complete}
Let $r \in \nat$ and let $a_1,a_2,\ldots,a_r \in \nat$.  Then the complete $r$-partite graph $\K(a_1,\ldots,a_r)$ is $\ca{D}$-unique if and only if for all $1\le i < j \le r$, either $\max\{a_i,a_j\} \le 2$ or $|a_i-a_j|\ge 2$.
\end{theorem}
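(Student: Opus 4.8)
The plan is to build both directions around the explicit form of the domination polynomial of a complete multipartite graph. A subset of $V(\K(a_1,\ldots,a_r))$ fails to dominate precisely when it is empty or a proper subset of a single color class, so the number of non-dominating sets of size $i\ge 1$ is $\sum_k\bin{a_k}{i}-|\{k:a_k=i\}|$; summing against $x^i$ and using $\sum_i\bin{m}{i}x^i=(1+x)^m$ gives, with $n=\sum_k a_k$,
\[
D(\K(a_1,\ldots,a_r),x)=(1+x)^n+\sum_{k=1}^{r}\bigl(x^{a_k}-(1+x)^{a_k}\bigr)+(r-1).
\]
Equivalently, the non-dominating generating function $\N{G,x}:=(1+x)^n-1-D(G,x)$ equals $\sum_k g(a_k)$ for $G=\K(a_1,\ldots,a_r)$, where $g(b):=(1+x)^b-x^b-1=\sum_{i=1}^{b-1}\bin{b}{i}x^i$. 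Throughout I write $\mathrm{nd}(H,i)$ for the number of non-dominating sets of size $i$ in $H$.

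For the ``only if'' direction I would use the $H_t$ construction. Suppose some pair of classes $V_i,V_j$ has $\max\{a_i,a_j\}\ge 3$ and $|a_i-a_j|\le 1$, and write $\{a_i,a_j\}=\{a,a+t\}$. Replace the induced $\K(a_i,a_j)$ on $U:=V_i\cup V_j$ by a copy of $H_t(a)$ on the same vertices, keeping every edge between $U$ and $V\setminus U$ and leaving the rest of the graph unchanged; call the result $H$, so $H=H_t(a)\vee\K(a_k:k\ne i,j)$. Classifying each $S\subseteq V$ by whether it meets both $U$ and $V\setminus U$, only $U$, or only $V\setminus U$, one checks that $S$ dominates $H$ if and only if it dominates $\K(a_1,\ldots,a_r)$: the only case needing an idea is $S\subseteq U$, where $S$ dominates iff it dominates the induced subgraph on $U$, and $d(\K(a_i,a_j),\cdot)=d(H_t(a),\cdot)$ by Fact~\ref{fact:AAE}. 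Hence $D(H,x)=D(\K(a_1,\ldots,a_r),x)$, while $\chi(H)=(a+t)+(r-2)>r=\chi(\K(a_1,\ldots,a_r))$ since $a+t\ge 3$, so $H\not\cong\K(a_1,\ldots,a_r)$.

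For the ``if'' direction, assume the $a_k$ satisfy the condition and $D(H,x)=D(G,x)$; I would induct on $n=v(H)$, which $D$ determines, so that $\N{H,x}=\sum_k g(a_k)$ is known. If $a_{\max}:=\max_k a_k\le 2$ then $\N{H,x}$ is a multiple of $x$, which forces every vertex of $H$ to have degree $n-1$ or $n-2$ and the degree-$(n-2)$ vertices to be perfectly matched in $\overline{H}$; reading their number off from $[x^1]\N{H,x}$ pins down $\overline{H}$, so $H\cong G$. If $a_{\max}\ge 3$, the hypothesis has two consequences that drive the argument: the multiplicity of $a_{\max}$ among the $a_k$ is $1$, and in the coefficients of $\N{G,x}$ all contributions other than those of the class of size $a_{\max}$ cancel, giving $\mathrm{nd}(H,a_{\max}-1)=a_{\max}$ and $\mathrm{nd}(H,a_{\max}-2)=\bin{a_{\max}}{2}$. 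Since the largest non-dominating set of $H$ has size $n-1-\delta(H)$, the first equality gives $\delta(H)=n-a_{\max}$ and identifies the non-dominating sets of size $a_{\max}-1$ --- equivalently, the sets $V\setminus N_H[v]$ over the minimum-degree vertices $v$ --- as exactly $a_{\max}=\bin{a_{\max}}{a_{\max}-1}$ distinct sets of that size; their shadow consists of non-dominating sets of size $a_{\max}-2$, hence has at most $\bin{a_{\max}}{2}$ elements, so the equality case of the Kruskal--Katona theorem forces these sets to be precisely the $(a_{\max}-1)$-subsets of a single $a_{\max}$-set $U$, with $V\setminus N_H[z]\subseteq U$ for every vertex $z$ of degree $\le\delta(H)+1$. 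The aim is then to upgrade this to $H=\overline{K_{a_{\max}}}\vee H'$ with $H'=H[V\setminus U]$ (that is, $U$ independent and completely joined to $V\setminus U$); granting it, the join identity for domination polynomials gives $D(H',x)=D(\K(a_k:k\ne k_0),x)$ for the index $k_0$ of the class of size $a_{\max}$, the remaining part sizes inherit the hypothesis, and induction yields $H'\cong\K(a_k:k\ne k_0)$, hence $H\cong\K(a_1,\ldots,a_r)$.

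The main obstacle is exactly this structural upgrade. Knowing that the top maximal non-dominating sets are the sub-facets of an $a_{\max}$-set $U$ does not by itself make $U$ a color class of $H$: a priori some $U\setminus\{u\}$ could equal $V\setminus N_H[v]$ for a vertex $v\notin U$ rather than for $u$ itself, and one must rule out such ``twisted'' configurations by tracking the closed neighborhoods of the low-degree vertices (and, if needed, squeezing one or two further coefficients of $\N{H,x}$). It is precisely here that the spacing hypothesis is indispensable: without it the clean value $\mathrm{nd}(H,a_{\max}-2)=\bin{a_{\max}}{2}$ is destroyed --- already $\K(3,4)$ gives $9\ne 6$ --- the Kruskal--Katona rigidity disappears, and the $H_t$ construction of the second paragraph produces a non-isomorphic $\D$-equivalent graph, so no such reconstruction of $H$ can hold. (Once $H$ is known to be complete multipartite, matching the multiset of its part sizes to $(a_1,\ldots,a_r)$ is routine, since in $\sum_k g(a_k)$ the coefficient of $x^{b-1}$ equals $b$ times the number of parts of size $b$, so the multiset can be read off from the largest part downward.)
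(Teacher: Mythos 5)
Your necessity argument and your treatment of the case $\max_k a_k\le 2$ are correct and essentially identical to the paper's, and your use of the Lov\'asz--Kruskal--Katona equality case to show that the non-dominating $(a_{\max}-1)$-sets of $H$ form $\bin{U}{a_{\max}-1}$ for a single $a_{\max}$-set $U$ is exactly the paper's key step. But you then stop at what you yourself call ``the main obstacle'': you never prove that $U$ is an independent set completely joined to $V\setminus U$, i.e.\ you never rule out the ``twisted'' configuration in which $U\setminus\{u\}=V\setminus N_H[v]$ for some $v\notin U$. Saying this must be done ``by tracking the closed neighborhoods of the low-degree vertices (and, if needed, squeezing one or two further coefficients)'' is a plan, not an argument, so the inductive step is incomplete precisely where the real work lies.

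The paper closes this gap with a short counting argument you should supply. By Lemma \ref{lem:mindeg}, each $X\in\ca{F}=\bin{U}{a_{\max}-1}$ has a witness $v_X$ with $N_H[v_X]=V\setminus X$; distinct $X$ give distinct closed neighborhoods and hence distinct witnesses, so $Z=\{v_X : X\in\ca{F}\}$ has $|Z|=a_{\max}=|U|$. Since $\ca{F}=\bin{U}{a_{\max}-1}$, every witness satisfies $|N_H[v_X]\cap U|=|U\setminus X|=1$, so for any $T\in\bin{Z}{a_{\max}-1}$ the set $\bigcup_{v\in T}\lp N_H[v]\cap U\rp$ has at most $a_{\max}-1<|U|$ elements and $T$ fails to dominate. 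Therefore $\bin{Z}{a_{\max}-1}\subseteq\ca{F}=\bin{U}{a_{\max}-1}$, which forces $Z=U$; then each $v\in U$ is the witness for $U\setminus\{v\}$, giving $N_H(v)=V\setminus U$. This is exactly the missing step: it yields $H=H[V\setminus U]\vee\K(a_{\max})$, after which Corollary \ref{cor:simpleobs} and your induction finish the proof as in the paper.
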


\noindent The proof of Theorem \ref{thm:complete} is given in Section \ref{sec:pfofcomplete}.  Our approach, which considers the structure of the family of non-dominating sets, also allows us to determine the $\D$-equivalence class of complete equipartite graphs of sufficiently high order.  We provide some additional notation to state this result. When $a_1=a_2=\cdots = a_r =: a$, we write $\K_r(a)$ in place of $\K(a_1,\ldots,a_r)$.  Note then that $\K(a)=\K_1(a)$ denotes an independent set on $a$ vertices, while the complete graph $K_r$ on $r$ vertices is thus $K_r=\K_r(1)$.  We also let $\ca{E}$ denote the empty graph (with no vertices or edges), and we define $\K_0(a)=\ca{E}$ for all $a \in \nat$.

We obtain a partial extension of Theorem \ref{thm:AAE} by determining the $\D$-equivalence class of the complete equipartite graphs $\K_r(a)$ for large $a$. To do so, we consider the {\em join} of graphs $G$ and $H$, denoted $G\vee H$, which we emphasize is defined only when $V(G) \cap V(H) = \emptyset$. In particular, for disjoint $G$ and $H$, $G \vee H$ is defined to be the graph formed by the union of $G$ and $H$ together with the edges in the complete bipartite graph with partitions $V(G)$ and $V(H)$. See the example in Figure \ref{fig:join}. (Note that $G \vee \ca{E}=G$ for all graphs $G$.)  
We extend this definition in the obvious way to the {\em join} of $k$ disjoint graphs $G_1,\ldots,G_k$, denoted by $G_1 \vee G_2 \vee \cdots \vee G_k = \bigvee_{i=1}^k G_i$.  In particular, note that $\K(a_1,\ldots,a_r)=\bigvee_{i=1}^r \K(a_i)$.

\begin{figure}
\label{fig:join}

\begin{center}
\begin{center}

\begin{tikzpicture}[scale=.8]

    \foreach \s in {0,72,144,216,288}{
        \fill (\s:1) circle (2pt);
        \draw[thick] (\s:1) -- (\s+72:1);

    }

    \node at (2,0) {$\bigvee$};

    \fill (4,1) circle (2pt);
    \fill (4,-1) circle (2pt);
    \fill (3.2,0) circle (2pt);
    \draw[thick] (4,1) -- (4,-1) -- (3.25,0) -- (4,1);

    \node at (5.5,0) {{\large \bf =}};

    \foreach \s in {0,72,144,216,288}{
    \foreach \a in {60,180,300}{

        \fill ($(8,0)+(\s:1)$) circle (2pt);
        \draw[thick] ($(8,0)+(\s:1)$) -- ($(8,0)+(\s+72:1)$);

        \fill (12,1) circle (2pt);
        \fill (12,-1) circle (2pt);
        \fill (11.2,0) circle (2pt);
        \draw[thick] (12,1) -- (12,-1) -- (11.25,0) -- (12,1);


        \draw ($(8,0)+(\s:1)$) -- (12,1);
        \draw ($(8,0)+(\s:1)$) -- (12,-1);
        \draw ($(8,0)+(\s:1)$) -- (11.25,0);

    }

    }

\end{tikzpicture}
\end{center}
\end{center}

\caption{The join of $C_5$ and $K_3$.}
\end{figure}
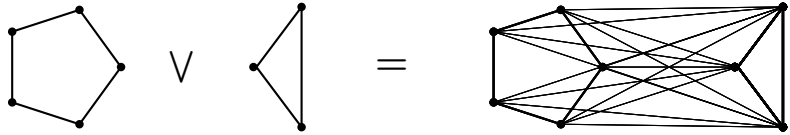

Now, for $r \ge 2$ and $0 \le t \le \lfloor r/2 \rfloor$, we define the graph $J_r(a,t)$ on $a\cdot r$ vertices as follows.  Given (vertex-disjoint) copies $H_1,\ldots,H_t$ of $H_0(a)$, let
\[J_r(a,t) = \lp \bigvee_{i=1}^t H_i \rp \vee \K_{r-2t}(a),\]
noting $J_r(a,0)=\K_r(a)$.  Examples of these graphs when $r=4$, $a=3$ and $0 \le t \le 2$ are given in Figure \ref{fig:jrat}.

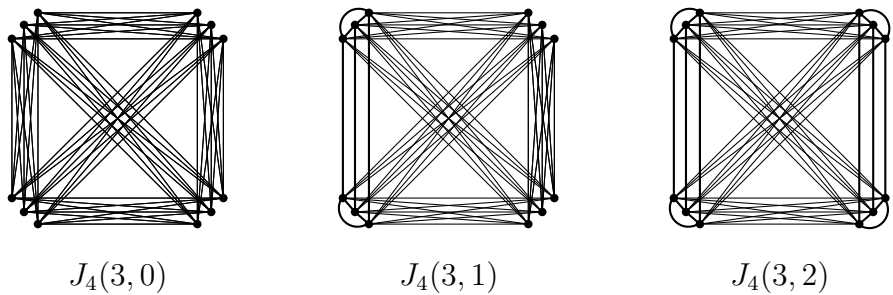
\begin{figure}

\begin{center}

\begin{tikzpicture}[scale=.8]


    \def\r{2.2}

    \foreach \s in {45,135,225,315}{
        \foreach \a in {-1,0,1}{
            \fill (\s+8*\a:\r) circle (2pt);
        }
    }


    \foreach \s in {45,135,225,315}{
        \foreach \a in {-1,0,1}{

        \foreach \t in {90,180,270}{
            \foreach \b in {-1,0,1}{
            \draw (\s+8*\a:\r) -- (\s+\t+8*\b:\r);

            }
        }

        }
    }

    \node[below] at (0,-\r) {$J_4(3,0)$};

\def\os{5.5}


    \foreach \s in {45,135,225,315}{
        \foreach \a in {-1,0,1}{
            \fill ($(\os,0)+(\s+8*\a:\r)$) circle (2pt);
        }
    }


    \foreach \s in {45,315}{
        \foreach \a in {-1,0,1}{
            \foreach \t in {90,180,270}{
                \foreach \b in {-1,0,1}{
                    \draw ($(\os,0)+(\s+8*\a:\r)$) -- ($(\os,0)+(\s+\t+8*\b:\r)$);
                }
            }
        }
    }

    \foreach \s in {135}{
        \foreach \a in {-1,0,1}{
            \draw[thick] ($(\os,0)+(\s + 8*\a:\r)$)-- ($(\os,0)+(\s+90 -8*\a:\r)$);
        }
    }

    \foreach \s in {135,225}{
        \foreach \a in {-1,1}{
            \draw[thick] ($(\os,0)+(\s :\r)$) -- ($(\os,0)+(\s + 8*\a:\r)$);
        }
        \draw[thick] ($(\os,0)+(\s+7.5:\r)$) arc (\s+80:\s-80:.14*\r);
    }

    \node[below] at (\os,-\r) {$J_4(3,1)$};

\def\osp{11}


    \foreach \s in {45,135,225,315}{
        \foreach \a in {-1,0,1}{
            \fill ($(\osp,0)+(\s+8*\a:\r)$) circle (2pt);
        }
    }


    \foreach \s in {45,315}{
        \foreach \a in {-1,0,1}{
            \foreach \t in {135,225}{
                \foreach \b in {-1,0,1}{
                    \draw ($(\osp,0)+(\s+8*\a:\r)$) -- ($(\osp,0)+(\t+8*\b:\r)$);
                }
            }
        }
    }

    \foreach \s in {135,315}{
        \foreach \a in {-1,0,1}{
            \draw[thick] ($(\osp,0)+(\s + 8*\a:\r)$)-- ($(\osp,0)+(\s+90 -8*\a:\r)$);
        }
    }

    \foreach \s in {45,135,225,315}{
        \foreach \a in {-1,1}{
            \draw[thick] ($(\osp,0)+(\s :\r)$) -- ($(\osp,0)+(\s + 8*\a:\r)$);
        }
        \draw[thick] ($(\osp,0)+(\s+7.5:\r)$) arc (\s+80:\s-80:.14*\r);
    }

    \node[below] at (\osp,-\r) {$J_4(3,2)$};

\end{tikzpicture}
\end{center}

\caption{The graphs $J_4(3,0)$,  $J_4(3,1)$, and  $J_4(3,2)$.}
\label{fig:jrat}
\end{figure}

\begin{theorem}\label{thm:balanced}
For $r \ge 2$ and $a \ge r+2$, $[\K_r(a)] = \{J_r(a,t) : 0 \le t \le \lfloor r/2 \rfloor\}$.
\end{theorem}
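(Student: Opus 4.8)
The asserted equality is two inclusions; the first is quick and the second carries all the work.

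\textbf{Every $J_r(a,t)$ is $\D$-equivalent to $\K_r(a)$.} Work with the non-domination polynomial $\N{G} = (1+x)^{v(G)} - 1 - D(G,x)$, whose $x^i$-coefficient counts the non-dominating subsets of $G$ of size $i$. The point is that $\N{\cdot}$ is additive over joins: $\N{\bigvee_i G_i} = \sum_i \N{G_i}$, since a subset of $V(\bigvee_i G_i)$ fails to dominate exactly when it lies inside a single $V(G_i)$ and fails to dominate $G_i$ there. As $\K(a)$ is edgeless, $\N{\K(a)} = (1+x)^a - 1 - x^a$, so $\N{\K_r(a)} = r\,\N{\K(a)}$; and by Fact~\ref{fact:AAE} together with additivity, $\N{H_0(a)} = \N{\K(a,a)} = 2\,\N{\K(a)}$. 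Hence $\N{J_r(a,t)} = t\cdot 2\,\N{\K(a)} + (r-2t)\,\N{\K(a)} = r\,\N{\K(a)} = \N{\K_r(a)}$, and since $v(J_r(a,t)) = ar = v(\K_r(a))$ we get $D(J_r(a,t),x) = D(\K_r(a),x)$. (These graphs are pairwise non-isomorphic, as the multiset of component orders of $\overline{J_r(a,t)}$ determines $t$.)

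\textbf{Conversely, any $G$ with $D(G,x) = D(\K_r(a),x)$ is some $J_r(a,t)$.} The key translation: $S$ is non-dominating in $G$ iff $S \subseteq N_{\overline G}(v)$ for some vertex $v$, so the non-dominating sets of $G$ form the down-closure of the open neighborhoods of the complement $\overline G$. Reading off $\N{G} = \N{\K_r(a)} = r\sum_{i=1}^{a-1}\binom{a}{i}x^i$: the degree of $D(G,x)$ gives $v(G) = ar$; vanishing of the $x^a,\dots,x^{ar}$-coefficients forces $|N_{\overline G}(v)| \le a-1$ for every $v$, so each non-dominating $(a-1)$-set must equal some $N_{\overline G}(v)$ with $\deg_{\overline G}(v) = a-1$; comparing counts, the $x^{a-1}$-coefficient $ar$ then forces $\overline G$ to be $(a-1)$-regular with all open neighborhoods pairwise distinct; finally a double count of paths of length two against the $x^2$-coefficient $r\binom{a}{2}$ shows that any two vertices of $\overline G$ have either $0$ or $a-2$ common neighbors (the average number of common neighbors over pairs with at least one is $a-2$, and distinctness of neighborhoods caps each at $a-2$).

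\textbf{Structural core.} Let $A$ be the adjacency matrix of $\overline G$. The common-neighbor dichotomy reads $A^2 = (a-1)I + (a-2)B$ for a symmetric $0$--$1$ matrix $B$ with zero diagonal (here $a\ge 4$ is used to divide by $a-2$); let $\Gamma$ have adjacency matrix $B$, so $v\sim_\Gamma w$ iff $v,w$ have a common neighbor in $\overline G$. If $u\sim_\Gamma v\sim_\Gamma w$, then the two $(a-2)$-subsets of $N_{\overline G}(v)$ given by the common neighbors of $\{u,v\}$ and of $\{v,w\}$ must meet (again using $a\ge4$), so $\Gamma$ is transitive and hence a disjoint union of cliques; by regularity these are $r$ copies of $K_a$, giving a partition $V = V_1\sqcup\cdots\sqcup V_r$ into classes of size $a$, and $B+I = \bigoplus_k J_{V_k} =: J'$. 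Since $A$ commutes with $A^2$ it commutes with $J'$, so $A\mathbf 1_{V_k}\in\mathrm{span}\{\mathbf 1_{V_1},\dots,\mathbf 1_{V_r}\}$, i.e.\ $c_{kl} := |N_{\overline G}(v)\cap V_k|$ depends only on the class $V_l$ of $v$. These $c_{kl}$ are symmetric with $\sum_l c_{kl} = a-1$, and applying $A^2 = I + (a-2)J'$ to $\mathbf 1_{V_k}$ yields $\sum_l c_{kl}^2 = (a-1)^2$; together these force a unique nonzero $c_{kl}$ for each $k$, equal to $a-1$. Thus every $V_k$ either induces $K_a$ in $\overline G$ and is a component, or is matched to a unique $V_l$ with $\overline G[V_k\cup V_l]$ an $(a-1)$-regular bipartite graph on the two sides, i.e.\ $K_{a,a}$ minus a perfect matching. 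Taking complements componentwise ($\overline{K_a} = \K(a)$, and the complement of $K_{a,a}$ minus a perfect matching is $H_0(a)$) gives $G \cong J_r(a,t)$ with $t$ the number of matched pairs, so $2t\le r$.

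\textbf{Main obstacle.} Harvesting $v(G)$, regularity of $\overline G$, and the common-neighbor dichotomy from the polynomial is routine, and the first inclusion is immediate once additivity of $\N{\cdot}$ is in hand. The genuine difficulty is the structural core: proving $\Gamma$ is a disjoint union of $K_a$'s and then that each component of $\overline G$ is one such clique or a matched pair with the forced bipartite structure. The identity $A^2 = (a-1)I + (a-2)B$, and the fact that $A$ respects the $\Gamma$-block decomposition, are what make this manageable; I expect locating that linear-algebraic route (rather than a more intricate direct combinatorial one) to be the crux. It is precisely here---via ``two $(a-2)$-subsets of an $(a-1)$-set must meet'' and the division by $a-2$---that the standing hypothesis $a\ge r+2$ is invoked (through its consequence $a\ge4$).
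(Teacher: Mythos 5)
Your proof is correct, and the hard direction takes a genuinely different route from the paper. The paper introduces the auxiliary graph $F$ on $V(H)$ whose edges are the dominating pairs, shows $F$ is $K_{r+1}$-free (this is precisely where $a\ge r+2$ is used: $r+1$ pairwise disjoint sets $Y_v=V\setminus N[v]$ of size $a-1$ would need $(r+1)(a-1)\le ar$), invokes Tur\'an's theorem to conclude $F\cong\K_r(a)$, and then shows each class of the Tur\'an partition is either independent in $H$ or matched to a unique partner inducing $H_0(a)$. Your graph $\Gamma$ is exactly the complement of their $F$, but you recover the partition without Tur\'an: the double count of the $x^2$-coefficient against $\sum_v\binom{a-1}{2}$ gives the $0$-or-$(a-2)$ common-neighbor dichotomy, the ``two $(a-2)$-subsets of an $(a-1)$-set meet'' argument gives transitivity of $\Gamma$, and the identity $A^2=I+(a-2)J'$ together with $\sum_l c_{kl}=a-1$, $\sum_l c_{kl}^2=(a-1)^2$ forces the block structure. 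Two remarks. First, your route only invokes $a\ge 4$ (a consequence of $a\ge r+2$, $r\ge 2$), not the full strength of $a\ge r+2$; if every step is as stated, you have in fact proved the classification for all $r\ge 2$ and $a\ge 4$, strictly more than the theorem and than the paper's remark that even $r$ allows $a\ge r+1$ --- worth double-checking, but I find no gap, and all the hypotheses you extract from the polynomial (order $ar$, $(a-1)$-regularity of $\overline{G}$ with distinct neighborhoods, the pair count $r\binom{a}{2}$) are independent of the relative size of $a$ and $r$. Second, you should spell out why $\Gamma$ is $(a-1)$-regular (apply your matrix identity to $\mathbf{1}$: $(a-1)^2=(a-1)+(a-2)\deg_\Gamma(v)$), since ``by regularity these are $r$ copies of $K_a$'' leans on it; it is a one-line computation but it is doing real work.
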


\noindent The proof of Theorem \ref{thm:balanced} will follow in Section \ref{sec:pfofbalanced}.

We remark that in this range of $a$, we will show that each of the graphs $J_r(a,i)$ are pairwise non-isomorphic as $i$ varies, and, in particular, the $\ca{D}$-equivalence class of $\K_r(a)$ contains $\lfloor r/2 \rfloor + 1$ distinct graphs.  We mention that our arguments also show that if $r$ is even, Theorem \ref{thm:balanced} holds for $a \ge r+1$.  In particular, Theorems \ref{thm:complete} and \ref{thm:balanced} imply Theorem \ref{thm:AAE}.

\section{Preliminaries}

For a set $X$ we let $2^X$ denote the power set of $X$, and for integers $k \ge 0$ we let $\bin{X}{k}$ denote the family of $k$-element subsets of $X$.  For a graph $G=(V,E)$, let $\N{G}\subseteq 2^V$ denote the family of non-dominating sets of $G$. Our arguments will rely in part on the following simple result.

\begin{lemma}\label{lem:nondom}
For graphs $G$ and $H$, $\N{G \vee H}=\N{G}\cup \N{H}$.
\end{lemma}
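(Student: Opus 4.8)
The plan is to prove the set equality $\N{G \vee H} = \N{G} \cup \N{H}$ by double inclusion, working directly from the definition of a dominating set and exploiting the structure of the join. The key observation is that in $G \vee H$, every vertex of $G$ is adjacent to every vertex of $H$ and vice versa, so a single vertex from either side "sees" the entire opposite side. A subtlety to note at the outset is that here $\N{G}$ and $\N{H}$ are viewed as subfamilies of $2^{V(G \vee H)} = 2^{V(G) \cup V(H)}$ via the disjointness of $V(G)$ and $V(H)$; I should make this identification explicit, since a ``non-dominating set of $G$'' literally means a subset $T \subseteq V(G)$ that fails to dominate $G$, and we are regarding it as a subset of $V(G \vee H)$.

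First I would handle the inclusion $\N{G} \cup \N{H} \subseteq \N{G \vee H}$. Suppose $T \in \N{G}$, so $T \subseteq V(G)$ and some vertex $u \in V(G) \setminus T$ has no neighbor in $T$ within $G$; since $T \subseteq V(G)$ and $T$'s neighbors in $G \vee H$ that lie in $V(G)$ are exactly its $G$-neighbors, $u$ still has no neighbor in $T$ in the graph $G \vee H$, so $u$ is undominated and $T \in \N{G \vee H}$. There is one degenerate case to dispatch: if $V(G) \setminus T = \emptyset$, i.e. $T = V(G)$, then $T$ actually does dominate $G$ (vacuously there is nothing left to dominate), unless $V(G) = \emptyset$; but the empty graph's only subset is $\emptyset$, and $\emptyset$ dominates the empty graph, so $T = V(G)$ is never in $\N{G}$ and this case does not arise. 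The argument for $T \in \N{H}$ is symmetric.

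For the reverse inclusion $\N{G \vee H} \subseteq \N{G} \cup \N{H}$, I would argue by contrapositive: take $S \subseteq V(G) \cup V(H)$ with $S \notin \N{G}$ and $S \notin \N{H}$, and show $S$ dominates $G \vee H$. Write $S_G = S \cap V(G)$ and $S_H = S \cap V(H)$. The cleanest route is a short case analysis on whether $S_G$ and $S_H$ are empty. If $S_H \ne \emptyset$, then any vertex of $S_H$ dominates all of $V(G)$ in $G \vee H$ (full adjacency across the join), so every vertex of $V(G)$ is dominated; and for $V(H)$, I claim $S$ dominates it — either $S_G \ne \emptyset$, in which case a vertex of $S_G$ dominates all of $V(H)$, or $S_G = \emptyset$ and then $S = S_H$, and since $S \notin \N{H}$ means $S$ (as a subset of $V(H)$, which it is) dominates $H$, hence dominates $V(H)$ in the supergraph $G \vee H$. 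The case $S_G \ne \emptyset$ is symmetric. The remaining case is $S_G = S_H = \emptyset$, i.e. $S = \emptyset$; but then $S = \emptyset \notin \N{G}$ forces $V(G) = \emptyset$ (an empty set dominates $G$ only if $G$ has no vertices), and likewise $V(H) = \emptyset$, so $G \vee H$ is the empty graph and $\emptyset$ dominates it. Thus in all cases $S$ dominates $G \vee H$, completing the proof.

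I do not anticipate a genuine obstacle here — the result is a direct unwinding of definitions — but the step requiring the most care is the bookkeeping around empty sets and the empty graph, together with being scrupulous that a ``non-dominating set of $G$'' which happens to equal $V(G)$ cannot occur, so that the identification of $\N{G}$ and $\N{H}$ as subfamilies of $2^{V(G \vee H)}$ behaves as expected. Everything else is the observation that one vertex on either side of a join dominates the whole other side.
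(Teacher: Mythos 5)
Your proof is correct and rests on exactly the same observation as the paper's one-sentence argument: in $G \vee H$ every vertex of $V(G)$ is adjacent to all of $V(H)$ and vice versa, so a set fails to dominate the join precisely when it lies entirely in one side and fails to dominate that side. Your version merely unwinds the paper's biconditional into a double inclusion with explicit handling of the empty-set edge cases.
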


\begin{proof}
Since for each $v \in V(G)$, $N_{G \vee H}(v)\supseteq V(H)$, and since for each $w \in V(H)$, $N_{G \vee H}(w) \supseteq V(G)$, a set $X \subseteq V(G) \cup V(H)$ does not dominate $G \vee H$ if and only if either $X \subseteq V(G)$ and $X$ does not dominate $G$, or $X \subseteq V(H)$ and $X$ does not dominate $H$.
\end{proof}

Since $\K(a)$ has no proper dominating sets, the following corollary is immediate.

\begin{corollary}\label{cor:ndsofcomplete}
$\N{\K(a_1,\ldots,a_r)}$ is the set of proper subsets of the color classes of $\K(a_1,\ldots,a_r)$.
\end{corollary}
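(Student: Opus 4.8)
The plan is to reduce the $r$-partite statement to the trivial one-part case using Lemma~\ref{lem:nondom} together with the decomposition $\K(a_1,\ldots,a_r)=\bigvee_{i=1}^r\K(a_i)$ noted above. First I would apply Lemma~\ref{lem:nondom} inductively on $r$: the vertex sets $V(\K(a_i))$ are pairwise disjoint, so the iterated join is well-defined, and a straightforward induction gives $\N{\bigvee_{i=1}^r \K(a_i)}=\bigcup_{i=1}^r \N{\K(a_i)}$. The base case $r=1$ is immediate, and the inductive step is a single application of the lemma with $G=\bigvee_{i=1}^{r-1}\K(a_i)$ and $H=\K(a_r)$.

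Next I would compute $\N{\K(a_i)}$ explicitly. Writing $C_i$ for the $i$-th color class, the graph $\K(a_i)$ is an independent set on $C_i$, hence has no edges; so a set $X\subseteq C_i$ dominates $\K(a_i)$ if and only if every vertex of $C_i$ lies in $X$, i.e.\ $X=C_i$. Therefore $\N{\K(a_i)}$ is exactly the family $\{X : X\subsetneq C_i\}$ of proper subsets of $C_i$ — this is precisely the remark, preceding the corollary, that $\K(a)$ has no proper dominating sets.

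Combining the two steps yields $\N{\K(a_1,\ldots,a_r)}=\bigcup_{i=1}^r\{X : X\subsetneq C_i\}$, which is by definition the collection of sets that are a proper subset of some color class, as claimed. For completeness I would note the edge cases are consistent: since each $a_i\ge 1$ (as $a_i\in\nat$), the empty set is a proper subset of every $C_i$ and is non-dominating, matching $d(G,0)=0$ for non-empty $G$. There is no real obstacle in this argument; the only point meriting a line of care is verifying that the repeated use of Lemma~\ref{lem:nondom} is legitimate, which holds because the disjointness hypothesis in the definition of the join is satisfied at each stage.
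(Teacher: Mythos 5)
Your argument is correct and is exactly the paper's intended proof: the paper treats the corollary as immediate from Lemma~\ref{lem:nondom} applied to the decomposition $\K(a_1,\ldots,a_r)=\bigvee_{i=1}^r\K(a_i)$, together with the observation that $\K(a)$ has no proper dominating sets. Your write-up just makes the routine induction and the disjointness check explicit.
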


Since the family of dominating sets of $G$ is $2^{V(G)}\setminus \N{G}$, Lemma \ref{lem:nondom} also easily implies the following result from \cite{AAP}.

\begin{lemma}[\cite{AAP}, Theorem 2]\label{lem:joindompoly}
For graphs $G$ and $H$,
\begin{equation}\label{eq:joindompoly}
D(G\vee H,x) = \lp (1 + x)^{v(G)}-1\rp \lp (1 + x)^{v(H)}-1 \rp + D(G,x) + D(H,x).
\end{equation}
\end{lemma}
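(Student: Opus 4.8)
\textbf{Proof plan for Lemma \ref{lem:joindompoly}.}

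The plan is to compute the number of dominating sets of $G \vee H$ of each cardinality directly from Lemma \ref{lem:nondom}, and then translate the resulting count of \emph{non}-dominating sets into the stated identity on domination polynomials via inclusion-exclusion on power sets. First I would recall the basic bookkeeping: for any graph $F$, the generating function $\sum_{S \subseteq V(F)} x^{|S|} = (1+x)^{v(F)}$ counts all subsets, so if we let $\overline{D}(F,x) = \sum_{T \in \N{F}} x^{|T|}$ be the generating function of the non-dominating sets of $F$, then $D(F,x) = (1+x)^{v(F)} - \overline{D}(F,x)$, since $\N{F}$ and the family of dominating sets partition $2^{V(F)}$. (One should note $\varnothing \in \N{F}$ for nonempty $F$, which is consistent with $D(F,0)=0$.) The whole lemma then reduces to understanding $\overline{D}(G\vee H, x)$.

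Next I would apply Lemma \ref{lem:nondom}: $\N{G\vee H} = \N{G} \cup \N{H}$, where we regard $\N{G}$ and $\N{H}$ as subfamilies of $2^{V(G) \cup V(H)}$. By inclusion-exclusion on this union of set families (tracking cardinalities with the variable $x$), we get
\[
\overline{D}(G \vee H, x) = \overline{D}(G,x) + \overline{D}(H,x) - \overline{D}(G,x)\cdot_{?}\overline{D}(H,x),
\]
and the key observation is that $\N{G} \cap \N{H}$ consists exactly of those sets that are simultaneously subsets of $V(G)$ and of $V(H)$; since $V(G) \cap V(H) = \varnothing$ (the join is only defined for disjoint graphs), the only such set is $\varnothing$, which lies in both $\N{G}$ and $\N{H}$. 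Hence $\N{G} \cap \N{H} = \{\varnothing\}$ and its generating function is the constant $1$, giving
\[
\overline{D}(G \vee H, x) = \overline{D}(G,x) + \overline{D}(H,x) - 1.
\]

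Finally I would substitute back. Writing $\overline{D}(F,x) = (1+x)^{v(F)} - D(F,x)$ for $F \in \{G, H, G\vee H\}$ and using $v(G \vee H) = v(G) + v(H)$, the displayed relation becomes
\[
(1+x)^{v(G)+v(H)} - D(G\vee H,x) = \lp (1+x)^{v(G)} - D(G,x)\rp + \lp (1+x)^{v(H)} - D(H,x)\rp - 1,
\]
and solving for $D(G \vee H, x)$ yields
\[
D(G\vee H, x) = (1+x)^{v(G)+v(H)} - (1+x)^{v(G)} - (1+x)^{v(H)} + 1 + D(G,x) + D(H,x),
\]
which is exactly \eqref{eq:joindompoly} after factoring the first four terms as $\lp (1+x)^{v(G)} - 1\rp\lp (1+x)^{v(H)} - 1\rp$. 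There is no real obstacle here; the only point requiring a moment's care is the intersection step, namely confirming that disjointness of the vertex sets forces $\N{G}\cap\N{H}$ to be the single empty set (a nonempty subset of $V(G)$ is not a subset of $V(H)$ and conversely), so that the inclusion-exclusion correction term is the constant polynomial $1$ rather than something larger. Everything else is the routine algebra of passing between a family and its complement within a power set.
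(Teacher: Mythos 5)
Your proposal is correct and follows exactly the route the paper indicates: it derives the identity from Lemma \ref{lem:nondom} by passing to the generating function of non-dominating sets and complementing within the power set, with the inclusion-exclusion correction term $1$ coming from $\N{G}\cap\N{H}=\{\varnothing\}$ (for nonempty $G$ and $H$). The paper states this only as an immediate consequence of Lemma \ref{lem:nondom} without writing out the algebra, and your write-up is precisely that omitted verification.
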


\noindent Note that in \eqref{eq:joindompoly}, $((1+x)^{v(G)}-1)\cdot ((1+x)^{v(H)}-1)$ is the generating function for the number of subsets $X \subseteq V(G) \cup V(H)$ such that  $X \cap V(G)\ne \emptyset$ and $X \cap V(H)\ne \emptyset$. Our arguments will only require the following corollary of Lemma \ref{lem:joindompoly}.

\begin{corollary}\label{cor:simpleobs}
For graphs $G_1$, $G_2$, and $H$, $D(G_1 \vee H,x) = D(G_2\vee H,x)$ if and only if $D(G_1,x)=D(G_2,x)$.
\end{corollary}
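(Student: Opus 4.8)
The plan is to deduce this directly from Lemma~\ref{lem:joindompoly}. Applying \eqref{eq:joindompoly} to each of $G_1 \vee H$ and $G_2 \vee H$ gives
\[
D(G_i \vee H, x) = \bigl((1+x)^{v(G_i)} - 1\bigr)\bigl((1+x)^{v(H)} - 1\bigr) + D(G_i,x) + D(H,x)
\]
for $i = 1, 2$. The key observation is that once we know $v(G_1) = v(G_2)$, the terms $((1+x)^{v(G_i)}-1)((1+x)^{v(H)}-1)$ and $D(H,x)$ are the same for $i=1$ and $i=2$, so subtracting them from both displays makes the asserted equivalence $D(G_1\vee H,x) = D(G_2\vee H,x)$ if and only if $D(G_1,x) = D(G_2,x)$ immediate.

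So the whole content reduces to checking that $v(G_1) = v(G_2)$ is forced in both directions. For this I would first note that the order of a graph is recoverable from its domination polynomial: if $G$ is non-empty then $V(G)$ is itself a dominating set, so $d(G,v(G)) = 1$ and $D(G,x)$ has degree exactly $v(G)$, while $D(\ca{E},x) = 0$; in either case $D(G_1,x) = D(G_2,x)$ implies $v(G_1) = v(G_2)$, which settles the ``if'' direction. For the ``only if'' direction, the same fact applied to $G_1 \vee H$ and $G_2 \vee H$ together with $v(G_i \vee H) = v(G_i) + v(H)$ gives $v(G_1) + v(H) = v(G_2) + v(H)$, hence again $v(G_1) = v(G_2)$; then cancelling the two common terms from \eqref{eq:joindompoly} yields $D(G_1,x) = D(G_2,x)$.

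I do not anticipate any real obstacle: the statement is a routine corollary of Lemma~\ref{lem:joindompoly}, and the only point needing a sentence of justification is the extraction of $v(G)$ from $D(G,x)$ (or, alternatively, one could simply observe that the hypothesis of either side already pins down the relevant orders). The proof will be a few lines.
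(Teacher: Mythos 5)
Your proposal is correct and is exactly the intended derivation: the paper states this as an immediate corollary of Lemma~\ref{lem:joindompoly}, obtained by writing out \eqref{eq:joindompoly} for both joins and cancelling the common terms. Your extra care in verifying that $v(G_1)=v(G_2)$ is forced in both directions (via the fact that $D(G,x)$ has degree $v(G)$, since the full vertex set always dominates) is a detail the paper leaves implicit but uses elsewhere, and it is handled correctly.
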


Next, we will appeal to the following lemma from \cite{AAP}.  Our statement differs slightly from that in \cite{AAP}, but the proof is identical; we include it for completeness.  Informally, this result shows that the domination polynomial both determines the minimum degree $\delta(G)$ of the graph and bounds below the number of vertices of minimum degree.

\begin{lemma}[\cite{AAP}, Lemma 4]\label{lem:mindeg}
Let $G=(V,E)$ be a graph of order $n$ with domination polynomial $D(G,x) = \sum_{i=1}^{n} d(G,i)x^i$, and let $\ell = \min\{j : d(G,j) = \bin{n}{j}\}$. Then $\delta(G) = n - \ell$, and for each non-dominating set $T$ of cardinality $\ell-1$, there exists a vertex $v_T$ such that $N[v_T] = V \setminus T$.
\end{lemma}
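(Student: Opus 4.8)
The plan is to give an explicit description of $\N{G}$, the family of non-dominating sets of $G$, in terms of closed neighborhoods, and then read off both assertions from that description.

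First I would record the elementary equivalence that a set $T\subseteq V$ fails to dominate $G$ if and only if some vertex $v\in V\setminus T$ has no neighbor in $T$, i.e.\ $N[v]\cap T=\emptyset$, equivalently $T\subseteq V\setminus N[v]$. Thus $\N{G}$ is precisely the downward closure of the family $\{V\setminus N[v] : v\in V\}$, and since $|V\setminus N[v]| = n-1-d_G(v)$, the largest member of $\N{G}$ has cardinality $m := n-1-\delta(G)$, attained by $V\setminus N[v]$ for any minimum-degree vertex $v$.

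The second step is to translate the condition defining $\ell$. The equality $d(G,j)=\bin{n}{j}$ says that every $j$-element subset of $V$ dominates $G$, that is, that $\N{G}$ has no member of size $j$. Because $\N{G}$ is closed under taking subsets (as observed in the introduction) and contains a set of size $m$, it contains a set of every size $j\in\{0,1,\ldots,m\}$, so $d(G,j)<\bin{n}{j}$ for all such $j$; and for $j>m$ it has no member of size $j$ by maximality of $m$, so $d(G,j)=\bin{n}{j}$ there. Hence $\ell = m+1 = n-\delta(G)$, which gives $\delta(G) = n-\ell$.

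For the second assertion I would take a non-dominating set $T$ with $|T|=\ell-1=m$; by the opening observation there is $v\in V\setminus T$ with $T\subseteq V\setminus N[v]$, and then
\[ m = |T| \le |V\setminus N[v]| = n-1-d_G(v) \le n-1-\delta(G) = m, \]
so every inequality is an equality, forcing $T = V\setminus N[v]$, i.e.\ $N[v] = V\setminus T$; taking $v_T=v$ finishes the proof. The argument is short and I do not foresee a genuine obstacle; the one point requiring care is the ``only if'' half of the claim that $d(G,j)=\bin{n}{j}$ holds exactly for $j>m$, which really uses the downward closure of $\N{G}$ — without that, one could only conclude $\ell\le m+1$ rather than pin $\ell$ down exactly.
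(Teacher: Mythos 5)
Your proof is correct and takes essentially the same approach as the paper's: both hinge on the facts that every non-dominating set is contained in some $V\setminus N[v]$ of size $n-1-d_G(v)$ and that $\N{G}$ is downward closed, and your closing squeeze $m=|T|\le n-1-d_G(v)\le n-1-\delta(G)=m$ is exactly the paper's argument for the second assertion. The only difference is presentational: you package the first assertion as a full description of $\N{G}$ and its maximum size, where the paper splits it into two short inequality arguments.
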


\begin{proof}
Suppose first that some vertex $v \in V$ has $d(v) < n-\ell$: then $V \setminus N[v]$ has cardinality at least $\ell$ and does not dominate, contradicting the choice of $\ell$.

Next, let $T \subseteq V$ be a non-dominating $(\ell-1)$-set: by definition, there exists a vertex $v_T \in V$ such that $N[v_T] \cap T = \emptyset$.  Since $d(v_T)\le n-(|T| +1) =n-\ell = \delta(G)$, equality holds and so $N[v_T] = V \setminus T$.
\end{proof}

\begin{corollary}[\cite{AAP}, Theorem 14]\label{cor:forcedregular}
Let $G$ be a graph, and suppose $H$ satisfies $D(H,x)=D(G,x)$.  Then $\delta(H)=\delta(G)=n-\ell$ for some $\ell \ge 1$, and $H$ contains at least $\bin{n}{\ell-1}-d(G,\ell-1)$ vertices of degree $\delta(H)$.
\end{corollary}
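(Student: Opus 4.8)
The plan is to deduce this directly from Lemma \ref{lem:mindeg}, applied to $G$ and to $H$ separately, using only that the order of a (non-empty) graph and the index $\ell$ appearing in that lemma are invariants of the domination polynomial. First I would note that, assuming $G$ and $H$ are non-empty, $d(G,v(G))=1$ since $V(G)$ dominates, while $d(G,i)=0$ for $i>v(G)$; hence $v(G)=\deg D(G,x)=\deg D(H,x)=v(H)=:n$. Consequently $\ell=\min\{j:d(G,j)=\bin{n}{j}\}$ is the same whether read off from $D(G,x)$ or $D(H,x)$, and $\ell\ge 1$ because $d(G,0)=0\ne 1=\bin{n}{0}$. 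Applying the first conclusion of Lemma \ref{lem:mindeg} to each of $G$ and $H$ then gives $\delta(H)=n-\ell=\delta(G)$.

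For the count of minimum-degree vertices of $H$, I would invoke the second conclusion of Lemma \ref{lem:mindeg} for the graph $H$: for every non-dominating set $T\subseteq V(H)$ with $|T|=\ell-1$ there is a vertex $v_T$ with $N_H[v_T]=V(H)\setminus T$, and so $d_H(v_T)=n-1-(\ell-1)=n-\ell=\delta(H)$, i.e.\ $v_T$ has minimum degree. The key observation is that $T\mapsto v_T$ is injective: if $v_T=v_{T'}$ then $V(H)\setminus T=N_H[v_T]=N_H[v_{T'}]=V(H)\setminus T'$, forcing $T=T'$. Hence the number of minimum-degree vertices of $H$ is at least the number of non-dominating $(\ell-1)$-subsets of $V(H)$, which is $\bin{n}{\ell-1}-d(H,\ell-1)=\bin{n}{\ell-1}-d(G,\ell-1)$, since a domination polynomial records exactly $d(H,\ell-1)=d(G,\ell-1)$ dominating sets of size $\ell-1$. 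This gives the claimed bound.

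I do not expect any genuine obstacle here: the corollary is essentially a restatement of Lemma \ref{lem:mindeg} for a graph known only through its domination polynomial. The only points that warrant a sentence of care are that $n$ and $\ell$ really are determined by the polynomial (so that Lemma \ref{lem:mindeg} may be applied to $H$ with the same $\ell$), and the injectivity of the assignment $T\mapsto v_T$, which is what converts a lower bound on non-dominating $(\ell-1)$-sets into a lower bound on minimum-degree vertices.
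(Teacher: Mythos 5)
Your proposal is correct and is exactly the derivation the paper intends: the corollary is stated as an immediate consequence of Lemma \ref{lem:mindeg}, obtained by noting that $n$ and $\ell$ are read off from the (shared) domination polynomial, applying the lemma to both $G$ and $H$, and using the injectivity of $T\mapsto v_T$ to convert the count of non-dominating $(\ell-1)$-sets into a count of minimum-degree vertices. The points you flag for care (invariance of $n$ and $\ell$, injectivity of $T\mapsto v_T$) are precisely the details the paper leaves implicit.
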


Finally, we will appeal to two classical results from extremal combinatorics.  The first is a special case of Tur\'an's Theorem.

\begin{theorem}[Tur\'an, \cite{T}]\label{thm:turan}
For $r \ge 3$ and $a \in \nat$, let $G$ be a graph on $n=a\cdot r$ vertices and suppose $G$ is $K_{r+1}$-free.  Then $e(G)\le e(\K_{r}(a))$, with equality if and only if $G \cong \K_{r}(a)$.
\end{theorem}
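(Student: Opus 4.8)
The plan is to prove the equivalent sharp statement that $\K_r(a)$ is the \emph{unique} $K_{r+1}$-free graph on $n = a\cdot r$ vertices with the maximum number of edges; the theorem follows at once. So fix a $K_{r+1}$-free graph $G$ on $n$ vertices with $e(G)$ as large as possible. The first step is to show, by \emph{Zykov symmetrization}, that $G$ is complete multipartite, which amounts to checking that non-adjacency is transitive on $V(G)$. Suppose not: there are vertices $x,y,z$ with $xy,yz \notin E(G)$ but $xz \in E(G)$. If $d_G(y) < d_G(x)$, form $G'$ from $G$ by deleting $y$ and adding a new vertex $y'$ with $N_{G'}(y') = N_G(x)$; any $K_{r+1}$ of $G'$ containing $y'$ yields, after exchanging $y'$ for $x$, a $K_{r+1}$ of $G$ (using $x \notin N_G(x)$), so $G'$ is $K_{r+1}$-free, while $e(G') = e(G) - d_G(y) + d_G(x) > e(G)$, a contradiction (and symmetrically if $d_G(y) < d_G(z)$). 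Otherwise $d_G(y) \ge \max\{d_G(x), d_G(z)\}$; form $G''$ from $G$ by deleting both $x$ and $z$ and adding two \emph{non-adjacent} new vertices $x', z'$, each with neighbourhood $N_G(y)$. A clique of $G''$ uses at most one of $x', z'$, and exchanging that vertex (if present) for $y$ gives a clique of $G$ of the same size (using $x,z \notin N_G(y)$), so $G''$ is $K_{r+1}$-free; but $e(G'') = e(G) - d_G(x) - d_G(z) + 1 + 2d_G(y) \ge e(G) + 1$ because $xz \in E(G)$, again a contradiction. Hence non-adjacency is an equivalence relation and $G$ is complete multipartite.

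Next I would pin down the parts. A complete $s$-partite graph contains $K_s$ but not $K_{s+1}$, so $G$ has at most $r$ nonempty parts; and since $n = ar \ge r$, if $G$ had fewer than $r$ parts then some part would have at least two vertices, and splitting it into two nonempty parts keeps the graph $K_{r+1}$-free while adding at least one edge, contradicting maximality. So $G$ has exactly $r$ parts, of sizes $n_1, \dots, n_r$ with $\sum_i n_i = n$, and $e(G) = \binom{n}{2} - \sum_{i=1}^r \binom{n_i}{2}$. If the $n_i$ are not all equal, choose $i,j$ with $n_i \ge n_j + 2$; moving one vertex from part $i$ to part $j$ changes $\sum_k \binom{n_k}{2}$ by $(n_j + 1) - n_i \le -1$, hence strictly increases $e(G)$. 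So at the maximum all $n_i$ are equal, i.e. $n_i = a$ for every $i$, and $G \cong \K_r(a)$. This shows $\K_r(a)$ is the unique edge-maximal $K_{r+1}$-free graph on $n=ar$ vertices, so any $K_{r+1}$-free $H$ on $n$ vertices satisfies $e(H) \le e(\K_r(a))$, and equality forces $H$ to be edge-maximal, whence $H \cong \K_r(a)$.

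The step I expect to require the most care is the symmetrization: one must simultaneously guarantee that the modified graph is still $K_{r+1}$-free --- which works because a clone of a vertex $v$ can always be swapped back to $v$ inside any clique --- and that the edge count strictly increases, which is exactly why the degree comparison must be split into the two cases above, with the two-vertex case relying on recovering the edge $xz$. An alternative route avoids symmetrization by induction on $r$: delete the closed neighbourhood of a vertex of maximum degree and apply the bound for $K_r$-free graphs to what remains; but tracking the equality case is more delicate there, so I would prefer the symmetrization proof.
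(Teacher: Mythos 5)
Your proof is correct. Note, however, that the paper does not prove this statement at all: it is Tur\'an's classical theorem, cited as a black box from \cite{T} and used only once (to identify the auxiliary graph $F$ in the proof of Theorem \ref{thm:balanced} as $\K_r(a)$). So there is no in-paper argument to compare against. What you have written is the standard Zykov symmetrization proof, and all the delicate points are handled properly: the cloned vertex is non-adjacent to its template, so any clique in the modified graph can be swapped back into a clique of $G$ of the same size, which preserves $K_{r+1}$-freeness; the two-vertex replacement correctly recovers the edge $xz$ to get the strict gain $e(G'')\ge e(G)+1$; and the balancing step (moving a vertex from a part of size $n_i\ge n_j+2$ changes $\sum_k\bin{n_k}{2}$ by $n_j+1-n_i\le -1$) both proves the bound and pins down the equality case, since every edge-maximal $K_{r+1}$-free graph is forced through the whole chain to be $\K_r(a)$.
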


The second is a variant of the Kruskal-Katona Theorem due to Lov\'asz (\cite{L}, Problem 13.31; see also \cite{K}, Theorem 1 for a short proof).  We recall that for a family $\ca{F} \subseteq \bin{V}{k}$ of $k$-element subsets of a set $V$,  the {\em shadow} of $\ca{F}$ is the family
\[\partial \ca{F} = \left\{ Y \subseteq V : |Y| = k-1, Y \subseteq X \text{ for some } X \in \ca{F}  \right\} = \bigcup_{X \in \ca{F}} \bin{X}{k-1}.\]

\begin{theorem}[\cite{L}]\label{thm:lkkt}
Let $k \in \nat$ be an integer, $V$ be a finite set, and let $\ca{F} \subseteq \bin{V}{k}$.  If $|\ca{F}| = \bin{x}{k}$ for some real $x \ge k$, then
\[|\partial \ca{F}| \ge \bin{x}{k-1}.\]
Moreover, if equality holds then $x \in \nat$ and $\ca{F} = \bin{X}{k}$ for some $X\subseteq V$, $|X| = x$.
\end{theorem}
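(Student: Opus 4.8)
The plan is to reprove this via the standard \emph{shifting} (compression) argument. For indices $1\le i<j$, let $S_{ij}$ be the operator on families $\ca{F}\subseteq\bin{V}{k}$ with $i,j\in V$ that replaces each $F\in\ca{F}$ with $j\in F$, $i\notin F$ and $(F\setminus\{j\})\cup\{i\}\notin\ca{F}$ by $(F\setminus\{j\})\cup\{i\}$, and fixes every other member of $\ca{F}$. A short case check gives $|S_{ij}(\ca{F})|=|\ca{F}|$ and $|\partial S_{ij}(\ca{F})|\le|\partial\ca{F}|$. Applying these operators repeatedly in an order that strictly decreases $\sum_{F\in\ca{F}}\sum_{v\in F}v$ terminates at a \emph{stable} family (one fixed by every $S_{ij}$); since $|\ca{F}|$ and the right-hand side $\bin{x}{k-1}$ are unchanged while $|\partial\ca{F}|$ only decreases, it suffices to prove the inequality for stable families, and, for the equality characterization, to treat stable families while also checking that the shifts used did nothing but relabel $\ca{F}$ whenever they preserved $|\partial\ca{F}|$.

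So let $\ca{F}\subseteq\bin{[n]}{k}$ be stable; the cases $n\le k$ are immediate, so assume $n\ge k+1$. Put $\ca{F}_0=\{F\in\ca{F}:n\notin F\}\subseteq\bin{[n-1]}{k}$ and $\ca{F}_1=\{F\setminus\{n\}:n\in F\in\ca{F}\}\subseteq\bin{[n-1]}{k-1}$. Splitting $\partial\ca{F}$ by whether a set contains $n$ gives $|\partial\ca{F}|=|\partial\ca{F}_0\cup\ca{F}_1|+|\partial\ca{F}_1|$. Stability forces $\ca{F}_1\subseteq\partial\ca{F}_0$: for $F'\in\ca{F}_1$ choose $j\in[n-1]\setminus F'$ (possible since $|F'|=k-1<n-1$); applying $S_{jn}$ to $F'\cup\{n\}\in\ca{F}$ and using stability shows $F'\cup\{j\}\in\ca{F}_0$, whence $F'\in\partial\ca{F}_0$. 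Therefore
\[
|\partial\ca{F}|=|\partial\ca{F}_0|+|\partial\ca{F}_1|.
\]
We then induct on $n$, with a secondary induction on $k$, applying the inductive bounds to $\ca{F}_0$ and $\ca{F}_1$ and using the extra structural constraint $|\ca{F}_1|\le|\partial\ca{F}_0|$. This reduces matters to a numerical statement about how the extremal shadow size behaves under the split $|\ca{F}|=|\ca{F}_0|+|\ca{F}_1|$; the clean way to organize this is through the \emph{cascade} representation $m=\bin{a_k}{k}+\bin{a_{k-1}}{k-1}+\cdots+\bin{a_s}{s}$ of $m=|\ca{F}|$, first proving the cascade form $|\partial\ca{F}|\ge\bin{a_k}{k-1}+\cdots+\bin{a_s}{s-1}$ (whose induction closes), and then establishing, as a separate analytic lemma, that $\bin{a_k}{k-1}+\cdots+\bin{a_s}{s-1}\ge\bin{x}{k-1}$ whenever $m=\bin{x}{k}$ with real $x\ge k$.

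For the ``moreover'' part one tracks equality through these reductions: if $|\partial\ca{F}|=\bin{x}{k-1}$ then the analytic lemma is tight, which forces $x\in\nat$ and the cascade of $\bin{x}{k}$ to be the single term $\bin{x}{k}$; the equality case of the cascade Kruskal--Katona theorem then forces the stable family to equal $\bin{[x]}{k}$, after which one argues that each shift preserving the shadow size was only a relabelling, so the original $\ca{F}$ equals $\bin{X}{k}$ for some $X\subseteq V$ of size $x$.

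I expect the genuine obstacle to be this last numerical layer: the inequality $\bin{a_k}{k-1}+\cdots+\bin{a_s}{s-1}\ge\bin{x}{k-1}$ and its equality analysis carry all of the ``real $x$'' content of the theorem and amount to a delicate estimate on falling factorials; by contrast, the set-system content (the compression inequalities and the identity $|\partial\ca{F}|=|\partial\ca{F}_0|+|\partial\ca{F}_1|$ for stable families) is routine. As the result is classical and is proved in full in \cite{L} and \cite{K}, in the paper we simply invoke it.
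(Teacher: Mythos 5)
The paper does not prove this theorem: it is imported as a classical result, stated with citations to Lov\'asz (\cite{L}, Problem 13.31) and to Keevash (\cite{K}, Theorem 1), so there is no in-paper argument to compare against, and your closing decision to ``simply invoke it'' is exactly what the paper does. As for the sketch you supply: it is a faithful outline of the standard compression proof, and the set-system portions you spell out are correct --- the operators $S_{ij}$ preserve $|\ca{F}|$ and do not increase $|\partial\ca{F}|$, the potential $\sum_{F}\sum_{v\in F}v$ guarantees termination, and for a stable family the identity $|\partial\ca{F}|=|\partial\ca{F}_0|+|\partial\ca{F}_1|$ together with $\ca{F}_1\subseteq\partial\ca{F}_0$ holds as you argue. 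But you have correctly located, and then deferred, everything that actually carries the weight of the statement: the closing of the cascade induction, the analytic lemma $\bin{a_k}{k-1}+\cdots+\bin{a_s}{s-1}\ge\bin{x}{k-1}$ for real $x$ with its equality analysis, and the propagation of the equality case backwards through the compressions (the claim that a shadow-preserving shift applied to a family compressing to $\bin{X}{k}$ is ``only a relabelling'' is true here but requires a genuine case analysis, not an appeal to a general principle, since compressions can preserve shadow size without being relabellings in general). So as a standalone proof the proposal has real gaps; as a plan it is sound. One genuine difference worth noting: the proof the paper actually points to, Keevash's, avoids the cascade representation entirely and runs a direct induction in the real parameter $x$ (showing $|\ca{F}_1|\ge\bin{x-1}{k-1}$ and inducting on the Lov\'asz-form bound itself), which is precisely what makes it short; your route through the full Kruskal--Katona cascade form proves a stronger intermediate statement at the cost of the delicate numerical comparison you flag.
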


\section{Proof of Theorem \ref{thm:complete}}
\label{sec:pfofcomplete}

We now turn to our classification of the $\D$-unique complete $r$-partite graphs. Let $r \in \nat$ and $a_1,\ldots,a_r \in \nat$ be given.  We will show that $G=\K(a_1,\ldots,a_r)$ is $\ca{D}$-unique if and only if
\begin{equation}\label{eq:uniqcond}
\max\{a_i,a_j\} \le 2  \ \ \ \text{ or } \ \ \ |a_i-a_j| \ge 2\; ,  \ \ \ \ \forall \ 1 \le i  < j \le r.
\end{equation}

We start with the simple proof of the necessity, expanding the observations of Aalipour, Akbari and Ebrahimi mentioned in the introduction.  Suppose that \eqref{eq:uniqcond} fails for some $i<j$, noting this requires that $r \ge 2$.  Relabelling the indices if necessary, we may assume $i=1$, $j=2$, and that $a_1 \le a_2$. Since both of the conditions of the disjunction \eqref{eq:uniqcond} fail, we must have that both $a_2=\max\{a_1,a_2\} \ge 3$ and $|a_2-a_1|=a_2-a_1 \le 1$. We can thus represent the difference of $a_2$ and $a_1$ as $k:=a_2-a_1 \in \{0,1\}$.  Taking $\K(a_3,\ldots,a_r)=\ca{E}$ if $r=2$, let
\[H = H_k(a_1) \vee \K(a_3,\ldots,a_r),\]
and observe that $\K(a_1,\ldots,a_r)=\K(a_1,a_2)\vee \K(a_3,\ldots,a_r)$.  Since $a_2 = a_1 + k$, Fact \ref{fact:AAE} gives that $D(H_k(a_1),x)=D(\K(a_1,a_2),x)$, and thus Corollary \ref{cor:simpleobs} yields $D(H,x)=D(\K(a_1,\ldots,a_r),x)$.  It remains only to show that $H \not\cong \K(a_1,\ldots,a_r)$.
But this, in turn, follows as $\chi(H_k(a_1))=a_1 + k=a_2 \ge 3$, and since the chromatic number of the join of two graphs is the sum of their chromatic numbers,
\[\chi(H)=\chi(H_k(a_1)) + \chi(\K(a_3,\ldots,a_r)) \ge 3 + (r-2) > r=\chi(G),\]
and thus $H \not \cong G$.\\

To show \eqref{eq:uniqcond} suffices to ensure $\D$-uniqueness, we proceed by induction on $r$.  The base case $r=1$ is the assertion that $\K(a)$ is $\D$-unique for all $a \in \nat$. Since $\K(a)$ is an independent set on $a$ vertices, the only dominating set is its entire vertex set. From the simple observation that any graph containing an edge also contains a proper dominating set, we see that $\K(a)$ is in fact $\D$-unique for all $a \in \nat$.

We therefore continue our inductive proof by assuming $r \ge 2$ and \eqref{eq:uniqcond} holds.  To simplify our further notation, for $G = \K(a_1,\ldots,a_r)$ we relabel the indices if necessary so that
\[a_1 \le a_2 \le \cdots \le a_r.\]
Let $n = a_1 + a_2 + \cdots + a_r=v(G)$, and let $H=(V,E)$ be any graph satisfying $D(H,x)=D(G,x)$, which immediately implies that $v(H)=n$.  Our argument proceeds by considering separately the cases $a_r \le 2$ and $a_r > 2$.\\

{\bf Case 1. $a_r \le 2$.}  Since $a_r = \max_i a_i$, the color classes of $G$ must all be of size $1$ or size $2$. Letting $d = |\{i : a_i = 2\}|$ and $s=r-d$, $G$ has $s$ singleton color classes and $d$ doubleton color classes.  In particular, it follows from Corollary~\ref{cor:ndsofcomplete} that $d(G,2)=\bin{n}{2}$ and $d(G,1)=n-2d$.  Since $D(H,x)=D(G,x)$, it follows that $H$ has exactly $n-2d$ vertices of degree $n-1$ (which dominate).  Furthermore, Lemma~\ref{lem:mindeg} implies that $\delta(H)=n-2$ if $d > 0$, and thus $H$ contains exactly $2d$ vertices of degree $n-2$.  To see that this implies $H \cong G$, simply observe that the complement of each consists of $n-2d$ isolated vertices and a matching on $2d$ vertices.\\

{\bf Case 2. $a_r > 2$.}  In this case, \eqref{eq:uniqcond} implies $a_i \le a_r - 2$ for all $i < r$.  Since the only color class of $G$ containing proper subsets of cardinality at least $a_r-2$ is the unique color class of size $a_r$, Corollary~\ref{cor:ndsofcomplete} yields
\begin{align}
\label{eq:dHar} d(H,a_r)=d(G,a_r)&=\textstyle\bin{n}{a_r}, \\
\label{eq:dHarm1} d(H,a_r -1)=d(G,a_r-1) &= \textstyle\bin{n}{a_r - 1} - \textstyle\bin{a_r}{a_r -1} \,\,\, \text{ and } \\
\label{eq:dHarm2} d(H,a_r-2) =d(G,a_r-2)&= \textstyle\bin{n}{a_r -2 } - \textstyle\bin{a_r}{a_r - 2}.
\end{align}

To use these bounds, we consider the following families of non-dominating sets of $H$.
\begin{align*}
\ca{F} &= \{X \in \N{H} : |X| = a_r-1\}, & \text{ and } & & \ca{G} &= \{Y \in \N{H} : |Y| = a_r-2\}.
\end{align*}
Since $X \in \N{H}$ implies $2^X \subseteq \N{H}$, it easily follows that $\partial \ca{F} \subseteq \ca{G}$ so by \eqref{eq:dHarm1}, \eqref{eq:dHarm2} and Theorem~\ref{thm:lkkt},
\[\bin{a_r}{a_r-2} = |\ca{G}| \ge |\partial \ca{F}| \ge \bin{a_r}{(a_r-1)-1} = \bin{a_r}{a_r-2}.\]
As equality holds throughout, it follows (again by Theorem~\ref{thm:lkkt}) that $\ca{F}=\bin{A}{a_r-1}$ for some $A \subseteq V$ with $|A|=a_r$.\\

Next, by \eqref{eq:dHar} and \eqref{eq:dHarm1} we may apply Lemma~\ref{lem:mindeg} with $\ell = a_r$ to conclude that for each non-dominating set $X \in \ca{F}$, we may select a vertex $v_X$ such that $N_H[v_X] = V\setminus X$.  Let $Z=\{v_X : X \in \ca{F}\}$ be the set of these vertices, observing that
\[|Z|=|\ca{F}|=\bin{a_r}{a_r-1}=a_r=|A|.\]
Since $\ca{F}=\bin{A}{a_r-1}$, for each $v =v_X \in Z$ we have $|N[v_X] \cap A| = |A\setminus X|=1$.  This implies that that each $T \subseteq Z$ with $|T|=a_r-1$ must be a non-dominating set in $H$, and thus $\bin{Z}{a_r-1}\subseteq \ca{F} = \bin{A}{a_r-1}$, which, as $|Z| = |A|$, yields that $Z=A$.

Consequently, for each $v \in A=Z$ we have $v = v_{A \setminus \{v\}}$ and so $N_H(z) = V\setminus A$.  But this implies that $A$ is an independent set and that
\[H = H[V \setminus A] \vee H[A] \cong H[V\setminus A] \vee \K(a_r).\]
Since $G=\K(a_1,\ldots,a_{r-1}) \vee \K(a_r)$, Corollary~\ref{cor:simpleobs} implies $D(H[V\setminus A],x)=D(\K(a_1,\ldots,a_{r-1}),x)$, and then induction implies $H[V \setminus A] \cong \K(a_1,\ldots,a_{r-1})$, and so
\[H \cong \K(a_1,\ldots,a_{r-1}) \vee \K(a_r) \cong G,\]
completing the proof of Theorem~\ref{thm:complete}.

\section{Proof of Theorem \ref{thm:balanced}}
\label{sec:pfofbalanced}
We begin by fixing $r \ge 2$ and $a \ge r+2$.  Let $G = \K_r(a)$, and let $H=(V,E)$ be any graph satisfying $D(H,x)=D(G,x)$.  To prove Theorem \ref{thm:balanced} we must show that for some $0 \le t \le \lfloor r/2 \rfloor$ we have $H \cong J_r(a,t)$, where
\[J_r(a,t) = \lp \bigvee_{i=1}^t H_0(a) \rp \vee \K_{r-2t}(a).\]
We note that $D(J_r(a,t),x)=D(G,x)$ follows from Corollary \ref{cor:simpleobs} by induction, since $D(H_0(a),x)=D(\K(a,a),x)$.
We also mention that $J_r(a,t)\not\cong J_r(a,t')$ for $t \ne t'$, which can be verified by observing that the complement of $H_0(a)$ is connected for $a \ge 3$ and thus the complement of $J_r(a,t)$ consists of $r-2t$ components of size $a$ and $t$ of size $2a$ each.

Let $n = a\cdot r$, so $n= v(G)=v(H)$.  Since $\K_r(a)$ is $(n-a)$-regular and (by Corollary~\ref{cor:ndsofcomplete}) has $r\cdot \bin{a}{a-1}=r\cdot a= n$ non-dominating sets of cardinality $a-1$, Corollary~\ref{cor:forcedregular} implies $H$ is $(n-a)$-regular as well.  For each vertex $v \in V$, we let
\[Y_v = V\setminus N_H[v]\]
so $Y_v \in \N{H}$ and $|Y_v|=a-1$.

Our next step is to identify `where' in the graph $H$ the sets $Y_v$ occur, in a sense to be made clear below. To this end, we define an auxiliary graph $F$ on $V$ with edge set $\{uv : uv \text{ dominates } H\}$.  Since $a > 2$, the only pairs of vertices in $G$ which dominate are its edges, implying $e(F) = d(H,2) = d(G,2) = e(\K_r(a))$.

\begin{claim}\label{clm:kr1free}
$F$ is $K_{r+1}$-free.
\end{claim}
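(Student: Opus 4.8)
The plan is to argue by contradiction: suppose $F$ contains a clique on $r+1$ vertices, say $W = \{w_0, w_1, \ldots, w_r\} \subseteq V$. The idea is that a clique in $F$ is a set of vertices that pairwise dominate $H$, and this is a very restrictive condition once we combine it with the fact that $H$ is $(n-a)$-regular and has the prescribed family of small non-dominating sets. First I would record the basic consequences of $w_iw_j$ being an edge of $F$: since $\{w_i,w_j\}$ dominates $H$, we have $N_H[w_i] \cup N_H[w_j] = V$, equivalently $Y_{w_i} \cap Y_{w_j} = \emptyset$ (where $Y_v = V \setminus N_H[v]$ as defined above). Thus the sets $Y_{w_0}, \ldots, Y_{w_r}$ are pairwise disjoint subsets of $V$, each of size $a-1$. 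Since $w_i \notin Y_{w_i}$ but possibly $w_i \in Y_{w_j}$ for $i \ne j$, I would be slightly careful here, but the key point is that $\sum_{i=0}^r |Y_{w_i}| = (r+1)(a-1)$ disjoint elements must fit inside $V$, which has size $n = ra$.

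The main step is to extract a contradiction from $(r+1)(a-1) \le n = ra$, i.e.\ from $ra - r + a - 1 \le ra$, i.e.\ from $a \le r+1$. Since we have assumed $a \ge r+2$, this is already a contradiction — \emph{provided} the $Y_{w_i}$ are genuinely pairwise disjoint as subsets of $V$. The subtlety is whether a vertex $w_i$ of the clique could lie in some $Y_{w_j}$; but $w_i \in Y_{w_j}$ means $w_i \notin N_H[w_j]$, i.e.\ $w_iw_j \notin E(H)$ and $w_i \ne w_j$, which is entirely consistent with $w_iw_j \in E(F)$ (domination does not require adjacency). So the $Y_{w_i}$ really are pairwise disjoint from the $Y$-disjointness above, and they are subsets of $V$; hence $\bigl|\bigcup_i Y_{w_i}\bigr| = (r+1)(a-1) \le n = ra$, forcing $a \le r+1$, contradicting $a \ge r+2$. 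This closes the argument.

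The step I expect to be the only real content is the disjointness bookkeeping in the previous paragraph — making sure that "pairwise dominating" translates cleanly into "the associated $Y$-sets are pairwise disjoint," and that this disjointness survives inside $V$ without any hidden overlap. Everything else is the cardinality count $(r+1)(a-1) > ra$ when $a \ge r+2$, which is immediate. I would also remark that the same count shows $F$ is $K_{r+1}$-free even in the boundary regime $a = r+1$ when $r$ is even — but for the present claim, the hypothesis $a \ge r+2$ is all that is needed, so I will keep the proof to the clean contradiction above and defer any sharper bookkeeping to where it is used.
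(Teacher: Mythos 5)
Your proof is correct and essentially identical to the paper's: both translate the clique condition into pairwise disjointness of the sets $Y_{w_i}$, each of size $a-1$, and derive the contradiction $(r+1)(a-1) \le |V| = ra$, i.e.\ $a \le r+1$, against the hypothesis $a \ge r+2$. (Your closing aside that the same count also handles $a=r+1$ is not quite right---there $(r+1)(a-1) = ra$ exactly, so the disjoint sets merely fill $V$ and no contradiction arises from the count alone---but you correctly flagged that remark as unneeded for the claim.)
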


\begin{proof}[Proof of Claim \ref{clm:kr1free}]
Suppose to the contrary that $F$ contains a copy of $K_{r+1}$ with vertex set $\{v_1,\ldots,v_{r+1}\}$.  Since for $1 \le i < j \le r+1$, the pair $v_iv_j$ dominates $H$ if and only if $Y_{v_i} \cap Y_{v_j} = \emptyset$, we conclude
\[a\cdot r = |V| \ge \sum_{i=1}^{r+1}|Y_{v_i}| = (r+1)(a-1)=ar + [a - (r+1)],\]
a contradiction as $a \ge r+2$.
\end{proof}

Thus, by Theorem~\ref{thm:turan} we have $F \cong \K_r(a)$, so we partition $V = V_1 \cup \cdots \cup V_r$ into the $r$ color classes of the auxiliary graph $F$.  Since any subset of $V$ intersecting more than one set $V_i$ contains an edge of $F$ and hence dominates, it follows that for each $v \in V$ we must have $Y_v \subseteq V_i$ for some $i$.

Now, for $1 \le i \le r$, let $U_i = \bigcup_{v \in V_i} Y_v$.  Since $v \ne w$ implies $Y_v \ne Y_w$ and since $a \ge 2$, it follows that $|U_i| \ge a$.  The proof of Theorem~\ref{thm:balanced} is then an immediate consequence of the following claim.

\begin{claim}\label{clm:nonnbrslieinparts}
For each $1 \le i \le r$ there is a unique $j$ (possibly with $j=i$) such that $U_i = V_j$ and $U_j = V_i$, and
\[H[V_i \cup V_j] \cong \begin{cases}
\K(a) & \text{ if } i=j,\\
H_0(a) & \text{ if } i \ne j.
\end{cases}\]
\end{claim}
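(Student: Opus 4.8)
The plan is to determine, for each color class $V_i$ of the auxiliary graph $F$, exactly which class is occupied by the family $\{Y_v : v \in V_i\}$, and then to show that the resulting self-map $\sigma$ of the index set $\{1,\dots,r\}$ is an involution, whose fixed points produce the $\K(a)$-blocks and whose transpositions produce the $H_0(a)$-blocks. \textbf{Step 1 (locating the $Y_v$).} For distinct $v,w\in V_i$ the pair $vw$ does not dominate $H$, since $V_i$ is independent in $F$; hence $Y_v\cap Y_w\ne\emptyset$, and as each $Y_u$ lies in a single class and distinct classes are disjoint, all the $Y_v$ with $v\in V_i$ lie in one common class, say $V_{\sigma(i)}$. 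Thus $U_i\subseteq V_{\sigma(i)}$, and since $|U_i|\ge a=|V_{\sigma(i)}|$ we get $U_i=V_{\sigma(i)}$. The $a$ distinct $(a-1)$-subsets $Y_v$ ($v\in V_i$) are then precisely the $a$ members of $\bin{V_{\sigma(i)}}{a-1}$, so $v\mapsto\phi_i(v)$, where $\{\phi_i(v)\}=V_{\sigma(i)}\setminus Y_v$, is a bijection $V_i\to V_{\sigma(i)}$ with $N_H[v]=(V\setminus V_{\sigma(i)})\cup\{\phi_i(v)\}$.

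\textbf{Step 2 ($\sigma$ is a permutation).} Next I would double-count the pairs $(x,v)$ with $x\in V_j$ and $x\in Y_v$. For a fixed $x$ there are exactly $|Y_x|=a-1$ such $v$, since $x\in Y_v\iff v\notin N_H[x]$; this gives $a(a-1)$ pairs. On the other hand, by Step 1, $Y_v$ meets $V_j$ only when $\sigma$ sends $v$'s class to $j$, in which case $Y_v\subseteq V_j$, so the count equals $(a-1)\cdot a\cdot|\sigma^{-1}(j)|$. Hence $|\sigma^{-1}(j)|=1$ for every $j$, i.e.\ $\sigma$ is a permutation of $\{1,\dots,r\}$.

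\textbf{Step 3 ($\sigma$ is an involution).} This is the crux, and the step I expect to carry the real content. Suppose $\sigma$ had a cycle of length $m\ge 3$, and let $k$ and $k'=\sigma(k)$ be consecutive on it; then $k\ne k'$, and since $m\ge 3$ the successor $\sigma(k')$ of $k'$ on the cycle is different from $k$, so $V_k\cap V_{\sigma(k')}=\emptyset$. Count the edges of $H$ between $V_k$ and $V_{k'}$ two ways. By Step 1 each $v\in V_k$ has $N_H(v)\cap V_{k'}=V_{k'}\setminus Y_v$, a single vertex, giving $a$ edges; but each $u\in V_{k'}$ has $Y_u\subseteq V_{\sigma(k')}$, which is disjoint from $V_k$, so $V_k\subseteq N_H(u)$, giving $a\cdot a$ edges. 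Thus $a=a^2$, impossible since $a\ge r+2$. Hence every cycle of $\sigma$ has length $1$ or $2$.

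\textbf{Step 4 (reading off the structure).} Given $i$, put $j=\sigma(i)$; then $\sigma(j)=i$ by Step 3, so $U_i=V_j$ and $U_j=V_i$, and this $j$ is unique because the classes $V_1,\dots,V_r$ are disjoint and nonempty. If $i=j$, then $v\notin Y_v\subseteq V_i$ forces $Y_v=V_i\setminus\{v\}$, hence $N_H(v)=V\setminus V_i$ for every $v\in V_i$, so $V_i$ is independent and $H[V_i]\cong\K(a)$. If $i\ne j$, then Step 1 gives $N_H(v)=(V\setminus V_j\setminus\{v\})\cup\{\phi_i(v)\}$ for each $v\in V_i$, and symmetrically on $V_j$, so $H[V_i]$ and $H[V_j]$ are complete graphs on $a$ vertices while the $V_i$--$V_j$ edges form the perfect matching $\{v\phi_i(v):v\in V_i\}$; thus $H[V_i\cup V_j]$ consists of two copies of $K_a$ joined by a perfect matching, i.e.\ $H_0(a)$. (The remaining observation, that any two distinct blocks $V_i$ and $V_i\cup V_j$ are completely joined in $H$, then upgrades the claim to $H\cong J_r(a,t)$, where $t$ is the number of transpositions of $\sigma$, proving Theorem~\ref{thm:balanced}.) The heart of the argument is Steps 2--3; Steps 1 and 4 are essentially bookkeeping and unwinding of definitions.
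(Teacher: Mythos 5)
Your proof is correct and follows essentially the same route as the paper: locate each family $\{Y_v : v \in V_i\}$ inside a single class $V_{\sigma(i)}$ so that $U_i = V_{\sigma(i)}$, show $\sigma$ is an involution, and read off $\K(a)$ or $H_0(a)$ accordingly. The only divergence is the middle step: where you first prove $\sigma$ is a permutation by double counting and then exclude longer cycles by counting edges between $V_k$ and $V_{\sigma(k)}$, the paper obtains $U_j = V_i$ in one line from the symmetry $w \in Y_v \iff v \in Y_w$ (each $w \in V_j = U_i$ lies in some $Y_v$ with $v \in V_i$, hence $v \in Y_w$ and so $Y_w \subseteq V_i$) --- both arguments ultimately rest on that same symmetry of non-adjacency.
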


\begin{proof}[Proof of Claim \ref{clm:nonnbrslieinparts}]
Let $1 \le i \le r$ be given and let $v,w \in V_i$ be distinct vertices.  Let $j,k$ be given so that $Y_v \subseteq V_j$ and $Y_w \subseteq V_k$.  Since $vw$ is not an edge of $F$ and hence does not dominate $H$, $Y_v \cap Y_w \ne \emptyset$ follows, implying $k=j$ and, letting $w \in V_i\setminus \{v_j\}$ vary, that $U_i\subseteq V_j$.  Since $|V_j|=a \le |U_i|$, $U_i = V_j$ follows.

Now, if $i = j$, then for each $v \in V_i$ we have $V_i = \{v\} \cup Y_v$, implying $V_i$ is independent in $H$.  If $i \ne j$, then for each $v \in V_i$, $Y_v \subseteq V_j$ implies $v$ is adjacent to all other vertices in $V_i$ (so $H[V_i] \cong K_a$), and to exactly one vertex in $V_j$.  Since $w \in Y_v$ implies $v \in Y_w$, $U_j = V_i$ follows (which easily yields the uniqueness of $j$), and by symmetric reasoning we have $H[V_j] \cong K_a$ and each vertex in $V_j$ has exactly one neighbor in $V_i$.  Thus, the edges between $V_i$ and $V_j$ in $H$ must form a matching, and $H[V_i \cup V_j] \cong H_0(a)$ then follows.
\end{proof}

Since $U_i \cup U_j \subseteq V_i \cup V_j$ implies $H = H[V_i \cup V_j] \vee H[V \setminus (V_i \cup V_j)]$ (as the set $U_i$ contains all the non-neighbors of vertices in $V_i$), Claim \ref{clm:nonnbrslieinparts} implies $H$ is the join of $t$ copies of $H_0(a)$, where $0 \le t \le \lfloor r/2 \rfloor$ since each copy has $2a$ vertices, and $r-2t$ copies of $\K(a)$.  Since $\K_{r-2t}(a)$ is the join of $(r-2t)$ copies of $\K(a)$, $H \cong J_r(a,t)$, and Theorem~\ref{thm:balanced} follows.

\section{Conclusion}

Our results have settled in the affirmative an open conjecture that complete bipartite graphs where the size of the partitions differs by at least two are $\D$-unique; that is, any two complete bipartite graphs where the size differs by more than one that have the same domination polynomial are isomorphic. Furthermore, we have extended these arguments to obtain a complete classification of $\D$-unique $r$-partite graphs, as well as the $\ca{D}$-equivalence class of complete equipartite graphs.  However, our approach has shed little light on how to prove the conjecture (mentioned in Section \ref{sec:intro} after Theorem \ref{thm:AAE}) characterizing the $\ca{D}$-equivalence class of $K_{n,n+1}$, and we feel this is the next problem in this area worthy of attention.

\section{Acknowledgements}
We would like to thank Ghodratollah Aalipour for providing us with preprints of \cite{AAE} and \cite{AAE2}.

\end{document}